\documentclass[paper=a4, fontsize=12pt]{scrartcl}

\usepackage[scaled]{helvet}
\usepackage[T1]{fontenc}

\usepackage[english]{babel}															
\usepackage[protrusion=true,expansion=true]{microtype}	
\usepackage{amsmath,amsfonts,amsthm} 
\usepackage{bm} 
\usepackage{dsfont}
\usepackage[pdftex]{graphicx}	
\usepackage{enumitem}

\usepackage{url}
\usepackage{colortbl}
\usepackage{booktabs}
\usepackage{tabularx} 
\usepackage{float} 
\usepackage{multirow}%
\usepackage[dvipsnames]{xcolor}
\usepackage{fullpage}
\usepackage{natbib}
\usepackage{subcaption}  
\usepackage{graphicx}
\usepackage{tabularx} 
\usepackage{float} 
\usepackage{caption}

\usepackage{xr-hyper} 

\usepackage[colorlinks=true,citecolor=blue,urlcolor=blue]{hyperref}
\usepackage[capitalise]{cleveref}

\newtheorem{theorem}{Theorem}

\newtheorem{corollary}[theorem]{Corollary}

\newtheorem{lemma}[theorem]{Lemma}

\definecolor{offwhite}{RGB}{255,250,240}
\definecolor{gray}{RGB}{155,155,155}
\definecolor{foreground}{RGB}{80,80,80}
\definecolor{background}{RGB}{255,255,255}
\definecolor{title}{RGB}{89,132,212}
\definecolor{subtitle}{RGB}{255,199,0}
\definecolor{hilit}{RGB}{248,117,79}
\definecolor{vhilit}{RGB}{255,111,207}
\definecolor{lolit}{RGB}{200,200,200}
\definecolor{lit}{RGB}{255,199,0}
\definecolor{mdlit}{RGB}{71,106,170}
\definecolor{link}{RGB}{248,117,79}
\definecolor{subcol}{RGB}{207,216,233}

\definecolor{darkraspberry}{rgb}{0.53, 0.15, 0.34}

\usepackage{sectsty}

\usepackage{fancyhdr}
\usepackage{chngcntr}
\counterwithout{table}{section}
\counterwithout{figure}{section}

\title{
		A uniform relative deviation inequality for VC-subgraph classes
}

\date{}

\author{\normalfont François Portier\footnote{Affiliation: CREST-ENSAI, University of Rennes 1, email: francois.portier@gmail.com, date: July 14th,  2026.}}

\begin{document}

\maketitle

\begin{abstract}

\noindent \textbf{Abstract:}
We establish a new Bernstein-type deviation inequality for classes of functions whose complexity is characterized through
subgraphs. The inequality is non-asymptotic, involves explicit constants, and features a relative normalization by the probability level. Applied to kernel density estimation, it produces a 
location and bandwidth-adaptive error bound between the estimator and the smoothed density, holding
simultaneously over all points on the real line and all positive bandwidths. The
proof is elementary, combining a new symmetrization principle, which incorporates the
relative normalization, with the maximal sub-Gaussian inequality, and requires neither
concentration nor entropy-integral arguments. When specialized to classes of sets, our
technique improves the constants in the classical Vapnik--Chervonenkis inequality with
relative deviation of Anthony and Shawe-Taylor (1993), reducing the factor
$4\,\mathbb{S}_{\mathcal{A}}(2n)$ to $\mathbb{S}_{\mathcal{A}}(2n)$ in the right-tail
inequality and to $3\,\mathbb{S}_{\mathcal{A}}(2n)$ in the left-tail inequality.

\noindent \textit{Keywords:}  Vapnik–Chervonenkis inequality; Uniform relative deviation inequality; Kernel density estimation; Adaptive confidence bands.

\end{abstract}

\section{Introduction}
Vapnik--Chervonenkis (VC) inequalities \citep{vapnik2015uniform,vapnik2013nature} provide uniform deviation bounds on the estimation error over classes of sets. They underlie many fundamental results in probability and statistics, including the uniform central limit theorem \citep{pollard1982central,alexander1987central}, generalization bounds in learning theory \citep[Theorem~7]{bousquet2003introduction}, and are a cornerstone of empirical process theory \citep{wellner1996}. They are also used to analyze the convergence of cluster trees \cite[Theorem~15]{chaudhuri2010rates}, to control the size of nearest neighbor radii \cite[Lemma~3]{xue2018achieving} and \cite[Lemma 3]{portier2025nearest}, and to evaluate the performance of neural network training \cite[Theorem~6]{bartlett1998sample} (see also \cite{anthony2009neural}).

Let $(Z_1,\dots,Z_n)$ be independent and identically distributed random variables with distribution $P$ on a measurable space $S$, and let $\mathcal{A}$ be a class of measurable subsets of $S$. Let $\mathbb{S}_{\mathcal{A}}(n)$ denote the shatter coefficient of $\mathcal A$, defined as the maximal number, over all $(z_1,\ldots,z_n)\in S^n$, of vectors in $\{0,1\}^n$ of the form $(\mathds{1}_A(z_1),\ldots,\mathds{1}_A(z_n))$ with $A\in\mathcal{A}$. For any $A\in\mathcal{A}$, the empirical estimator of $P(A)$ is defined as $P_n(A) = n^{-1}\sum_{i=1}^n \mathds{1}_A(Z_i)$. The strongest version of the VC inequalities involves relative deviations and reads as follows: for all $t>0$,
$$\mathbb{P}\left(\sup_{A\in\mathcal{A}} \frac{P(A)-P_n(A)}{\sqrt{P(A)}} > t\right) \leq 4\,\mathbb{S}_{\mathcal{A}}(2n)\exp\left(-\frac{nt^2} 4\right).$$
This inequality is, to the best of our knowledge, the sharpest available; it was obtained by \cite{anthony1993result}. Its proof combines a symmetrization of the tail probability, tailored to the relative deviation, with the standard sub-Gaussian property of the Rademacher process, working conditionally on $Z_1,\ldots,Z_n$. The symmetrization step is responsible for the constant $4$, as it involves the probability that a binomial random variable exceeds its expectation \citep{greenberg2014tight}. As pointed out in \cite{cortes2019relative}, the reverse inequality, in which the roles of $P$ and $P_n$ are exchanged, cannot be obtained by simply adapting this proof. The version derived in \cite{cortes2019relative} controls the quantity $({P_n(A)-P(A)})/\sqrt{P_n(A)+\tau}$ for some $\tau>0$, and is therefore weaker, owing to the additional shift $\tau$ in the normalization. Of course, weaker versions can be obtained without the relative deviation as for instance in Theorem 12.4 in \cite{devroye2013probabilistic}.

The first result of this work is the following improvement: for all $t>0$,
$$\mathbb{P}\left(\sup_{A\in\mathcal{A}} \frac{P(A)-P_n(A)}{\sqrt{P(A)}} > t\right) \leq 3\,\mathbb{S}_{\mathcal{A}}(2n)\exp\left(-\frac{nt^2} 4\right),$$
and
$$\mathbb{P}\left(\sup_{A\in\mathcal{A}} \frac{P_n(A)-P(A)}{\sqrt{P_n(A)}} > t\right) \leq \mathbb{S}_{\mathcal{A}}(2n)\exp\left(-\frac{nt^2} 4\right).$$
The proof is elementary: neither concentration nor entropy-integral arguments are needed, only a \textit{symmetrization principle} combined with the \textit{maximal sub-Gaussian inequality}. The symmetrization principle relies on Jensen's inequality applied to suitably chosen convex functions. It extends those of \cite[Exercise~11.5]{boucheron2013concentration} and \cite[Lemma~2.3.6]{wellner1996}, the key innovation being that it incorporates the relative normalization by $\sqrt{P(A)}$ or $\sqrt{P_n(A)}$. In contrast with the proofs of \cite{anthony1993result} and \cite{cortes2019relative}, the binomial event mentioned above is no longer involved, which is how the constant $4$ is improved: to $3$ in the first inequality and to $1$ in the second.

The previous inequalities apply to probabilities of sets $A \in \mathcal{A}$, providing control over the discrepancy between $P(A)$ and $P_n(A)$. They do not extend directly to real-valued functions, that is, to the difference $P_n(f)-P(f)$, where $P_n(f) = \int f \, \mathrm{d}P_n$, $P(f) = \int f \, \mathrm{d}P$, and $f$ belongs to a class of real-valued functions defined on $S$. The main obstacle is simply that the shatter coefficient is a combinatorial concept defined for sets rather than functions. As pointed out in \cite{vapnik2013nature} (see the bibliographical notes of Chapter 5), there exist multiple ways to make this generalization, and a substantial body of work has been devoted to extending VC inequalities to real-valued function classes (see for instance \cite{wellner1996,boucheron2013concentration,devroye2013probabilistic}).

The second result of this work is a new deviation inequality valid uniformly over a class of functions. For a countable class $\mathcal F$ with subgraphs $\mathrm{sg}(\mathcal{F}) = \{\mathrm{sg}(f) \,:\,f\in \mathcal F\} $, where $\mathrm{sg}(f) = \{(x,y)\in S\times \mathbb R \,:\, y\leq f(x)\}$, we establish that, for any $n\geq 1$, $t>0$ and $\gamma>0$ such that $nt\sqrt{\gamma} \geq 16/9$,
$$\mathbb{P}\left(\sup_{f\in\mathcal{F}} \frac{P_n(f)-P(f)-\gamma}{\sqrt{P_n(f)}} > t\right) \leq \mathbb{S}_{\mathrm{sg}(\mathcal{F})}(2n)\exp\left(-\frac{nt^2} 4\right),$$
and
$$\mathbb{P}\left(\sup_{f\in\mathcal{F}} \frac{P(f)-P_n(f)-\gamma}{\sqrt{P(f)}} > t\right) \leq \mathbb{S}_{\mathrm{sg}(\mathcal{F})}(2n)\exp\left(-\frac{nt^2} 4\right).$$
Choosing $\gamma = t^2$ and $t= 2/\sqrt n $, the above becomes similar to a Bernstein-type inequality with two terms in the upper bound: one scaling as $\sqrt{P_n(f) /n }$ and the other as $1/n$. A related inequality is obtained in \cite{bartlett1999inequality} under an $L_\infty$-covering number assumption on $\mathcal{F}$ (see also \cite{pollard1995uniform} for another such inequality). 
Let us also mention \cite{MaurerPontil2009} and \cite{audibert2009exploration}, where Bernstein-type inequalities featuring a data-dependent variance term are obtained; these are related to our bounds involving the empirical quantity $P_n(f)$. For the previous references, while their convergence rate is optimal, the covering-number requirement is restrictive, as the $L_\infty$-metric is used. Our inequality provides the same rate, but no $L_\infty$-covering numbers are needed: only the VC-subgraph property is required. This is particularly useful for localized classes of functions, for which the localization bandwidth does not affect the VC-subgraph complexity, as illustrated in Section \ref{sec:applications}. 

We consider an application to kernel density estimation, deriving a non-asymptotic uniform bound with explicit constants valid for any sample size. The result is a high-probability error bound between the estimator and the smoothed density $f_h$, holding simultaneously over all points on the real line. Unlike chaining-based uniform bounds \citep{nolan+p:1987,einmahl2000,gine+g:02,gine2009,portier2025nearest,baraud2016bounding}, the bound adapts to the local density level, yielding tighter guarantees in low-density regions. This location-adaptive property provides a natural foundation for data-driven confidence bands for the smoothed density, and opens the way for future construction of confidence bands for the true density, in the spirit of the bands obtained in \cite{juditsky2003nonparametric,robins2006adaptive,wasserman_2008,gine_nivkl_band}.

The outline is as follows. Section \ref{sec:main_results1} provides a refinement of the constants obtained in \cite{anthony1993result}. Section \ref{sec:main_results2} considers the case of real-valued function classes through their subgraphs and Section \ref{sec:applications} investigates an application to kernel density estimation. All the proofs are given in Section~\ref{sec:proofs}.

\section{A VC inequality for sets}\label{sec:main_results1}

Let $P$ be a probability measure on a measurable set $S$. Let $\mathcal{A}$ be a class of measurable subsets of $S$.
The \textit{shatter coefficient} of $\mathcal A$ is defined, for any $n\geq 1$, as $$\mathbb S_{\mathcal A}(n) : = \max_{(z_1,\ldots, z_n)\in S^n}\left|\{ (\mathrm 1 _A(z_1),\ldots ,\mathrm 1 _A(z_n)) \,:\, A\in \mathcal A\}\right|. $$
We shall assume throughout that $\mathcal A$ is countable for measurability reasons. We also use the convention $``0/0= 0''$. The next theorem improves upon the results of \cite{anthony1993result}, reducing the factor $4\,\mathbb{S}_{\mathcal{A}}(2n)$ to
$\mathbb{S}_{\mathcal{A}}(2n)$ in the right-tail inequality. The left-tail inequality appears to be new as pointed out in \cite{cortes2019relative}.

\begin{theorem}\label{theorem:th1}
Let $(Z_1,\dots,Z_n)$ be independent and identically distributed random variables with distribution $P$ on $S$. Let $\mathcal{A}$ be a countable class of measurable subsets of $S$.  We have for all $t>0$,
$$ \mathbb P \left(  \sup_{A\in \mathcal A}    \frac{P_n(A) -P (A) }{\sqrt{ P_n(A)      }}  > t  \right) \leq \mathbb S_{\mathcal A}(2n)\exp\left(-\frac{nt^2} 4\right).$$
and
$$ \mathbb P \left(  \sup_{A\in \mathcal A }    \frac{P(A) -P_n (A) }{\sqrt{  P (A )   }}  > t \right)\leq
3\mathbb S_{\mathcal A}(2n) \exp\left(-\frac{nt^2} 4\right).$$
\end{theorem}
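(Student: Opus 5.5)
The plan is an exponential Chernoff bound, a ghost-sample symmetrization carried out through Jensen's inequality on a well-chosen convex function, and then the maximal sub-Gaussian inequality applied conditionally on the $2n$ points. Let $(Z_1',\dots,Z_n')$ be an independent copy of the sample, let $P_n'$ be its empirical measure, and write $\mathbb E'$ for expectation over the copy only, so that $P(A)=\mathbb E'[P_n'(A)]$.

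\textbf{First inequality.} Fix $x\ge 0$ and consider $y\mapsto (x-y)/\sqrt{x+y}$ on $y\ge 0$. With $u=x+y$ it equals $2x\,u^{-1/2}-u^{1/2}$, a sum of two convex functions of $u$, so it is convex in $y$. Jensen's inequality in $P_n'(A)$, conditionally on the sample, then gives
$$\frac{P_n(A)-P(A)}{\sqrt{P_n(A)+P(A)}}\le \mathbb E'\Big[\frac{P_n(A)-P_n'(A)}{\sqrt{P_n(A)+P_n'(A)}}\Big].$$
On the event $(P_n(A)-P(A))/\sqrt{P_n(A)}>t$ we have $P(A)<P_n(A)$, hence $P_n(A)+P(A)\le 2P_n(A)$, and the left-hand side above exceeds $t/\sqrt2$. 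So the original event is contained in $\{\sup_A \mathbb E'[X_A]>t/\sqrt2\}$, where $X_A$ is the symmetric ratio inside $\mathbb E'$.

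For $\lambda>0$, apply Markov's inequality to $\exp(\lambda\sup_A\mathbb E'X_A)$, then use $\sup_A\mathbb E'X_A\le\mathbb E'\sup_A X_A$ and Jensen for the exponential. This bounds the probability by
$$e^{-\lambda t/\sqrt2}\,\mathbb E\exp\Big(\lambda\sup_A X_A\Big).$$
The denominator of $X_A$ is invariant under swapping $Z_i\leftrightarrow Z_i'$, so we may insert Rademacher signs $\varepsilon_i$ and write $X_A=n^{-1}\sum_i\varepsilon_i d_i(A)/\sqrt{P_n(A)+P_n'(A)}$ with $d_i(A)=\mathds 1_A(Z_i)-\mathds 1_A(Z_i')$. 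Since $d_i^2\le \mathds 1_A(Z_i)+\mathds 1_A(Z_i')$, conditionally on the $2n$ points each $X_A$ is sub-Gaussian with variance proxy $n^{-2}\sum_i d_i^2/(P_n+P_n')\le 1/n$. Also $X_A$ depends on $A$ only through its trace on the $2n$ points, so at most $\mathbb S_{\mathcal A}(2n)$ distinct variables occur. The maximal sub-Gaussian inequality gives $\mathbb E\exp(\lambda\max)\le \mathbb S_{\mathcal A}(2n)e^{\lambda^2/(2n)}$. Choosing $\lambda=nt/\sqrt2$ produces exactly $\mathbb S_{\mathcal A}(2n)e^{-nt^2/4}$.

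\textbf{Second inequality.} This is the main obstacle. The same scheme would use $y\mapsto (y-x)/\sqrt{x+y}=u^{1/2}-2x\,u^{-1/2}$, which is concave, so Jensen points the wrong way. I would keep the symmetric target $(P_n'(A)-P_n(A))/\sqrt{P_n(A)+P_n'(A)}$ and try two routes in turn.

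\emph{First route.} Apply Jensen with the convex map in the other argument, treating $P_n(A)$ as the averaged variable. That is, use an independent copy for the term carrying $P$, and bound $P(A)$ from above through $\sqrt{P(A)}$ in the denominator on the event $P_n(A)<P(A)$ (there $P_n+P\le 2P$).

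\emph{Second route, if the first fails.} Split according to whether the ghost sample satisfies $P_n'(A)\ge P(A)$ on an appropriate convex surrogate such as $y\mapsto (y-t\sqrt{y})$. Controlling the complementary event would reuse the first inequality, or its exponential-moment bound, applied to the ghost sample. I expect this decomposition to cost additive copies of $\mathbb S_{\mathcal A}(2n)e^{-nt^2/4}$, and budgeting it into three such terms is how I would aim for the constant $3$.

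In both routes, the Rademacher/sub-Gaussian step is identical to the first part.
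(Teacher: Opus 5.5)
Your proof of the first inequality is correct and is essentially the paper's argument. The only difference is the order of steps: the paper exponentiates first and applies Jensen to $y\mapsto\exp(\lambda(x-y)/\sqrt{x+y})$, while you apply Jensen to the convex ratio and exponentiate afterwards.

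The second inequality, however, is not proved. You correctly identify the obstacle, but neither route gets past it. In your first route there is no valid Jensen step. The only quantity available as an expectation is $P(A)=\mathbb E'P_n'(A)$, and convexity of the ratio in $P_n(A)$ at fixed $P(A)$ only gives $\mathbb E\,\phi(P_n(A))\ge\phi(P(A))=0$, which is a lower bound. Your second route is a hope rather than an argument. The complementary event (the ghost sample falling below $P(A)$) is a lower deviation of $P_n'$, and the first inequality is an upper-deviation bound, so it does not control it. The classical way around this uses $\mathbb P(\mathrm{Bin}\ge\text{mean})\ge 1/4$; that is exactly the Anthony--Shawe-Taylor symmetrization, and it costs the factor $4$. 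Nothing in your sketch actually produces the three terms you budget for.

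The missing idea is that the exponential of the concave ratio is still convex once $\lambda\sqrt{x}$ is bounded below. Take $x=P_n(A)$, $u=x+y$ and $h(u)=(u-2x)/\sqrt u$. Convexity of $y\mapsto e^{\lambda h(x+y)}$ on $y\ge 0$ is equivalent to $h''+\lambda(h')^2\ge 0$ for $u\ge x$, that is,
\[
\lambda\ \ge\ \sup_{u\ge x}\frac{(u+6x)\sqrt u}{(u+2x)^2}=\frac{7}{9\sqrt x}.
\]
So you must apply Markov with $\lambda=nt/\sqrt2$ before Jensen; the order you used in part one cannot be transplanted. Since $P_n(A)\in\{0,1/n,2/n,\dots\}$, convexity holds for every $A$ with $P_n(A)\ge 1/n$ as soon as $t\sqrt n\ge 7\sqrt2/9$. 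For these sets your ghost-sample and Rademacher steps go through unchanged (after Jensen, enlarge the supremum to all of $\mathcal A$), giving $\mathbb S_{\mathcal A}(2n)e^{-nt^2/4}$.

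The sets with $P_n(A)=0$ need a separate tool. For them the event reads $\{P(A)>t^2\}$, and the zero-empirical-error VC bound of Blumer et al.\ (Theorem 12.7 in Devroye, Gy\"orfi and Lugosi) gives $2\mathbb S_{\mathcal A}(2n)2^{-nt^2/2}\le 2\mathbb S_{\mathcal A}(2n)e^{-nt^2/4}$, because $\log 2/2>1/4$. This is where $3=1+2$ comes from. For $t\sqrt n<7\sqrt2/9$ the bound $3\mathbb S_{\mathcal A}(2n)e^{-nt^2/4}$ exceeds $1$, so there is nothing to prove.
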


The asymmetry in the right-hand side upper bounds is due to some lack of convexity in the proof requiring the introduction of the event that $P_n(A) = 0$. Such an asymmetry is also present in the multiplicative Chernoff bound \citep{hagerup1990guided} which can be seen as a non-uniform version of the previous inequality. Whether the constants $3$ and $1$ appearing in front of $\mathbb S_{\mathcal A}(2n)$ in each bound are optimal remains an open question. The above inequalities can be inverted to yield bounds featuring either the empirical quantity $P_n$ or its population counterpart $P$, as follows.

\begin{corollary}\label{theorem:cor1}
In the setting of Theorem \ref{theorem:th1}, let $\delta\in(0,1)$. We have, with probability $1-\delta$, for all $A\in \mathcal A$,
\begin{align*}
&n(P_n(A)- P (A )) \\
&\leq \left( \sqrt{4 n P(A)\log\left( \frac{ \mathbb  S_{\mathcal A}(2n)}{\delta}\right)}  +   4 \log\left( \frac{ \mathbb  S_{\mathcal A}(2n)}{\delta}\right) \right) \wedge   \sqrt{4 n P_n(A)\log\left( \frac{ \mathbb  S_{\mathcal A}(2n)}{\delta}\right)}. 
\end{align*}  
We have, with probability $1-\delta$, for all $A\in \mathcal A$,
\begin{align*}
 &n(P(A)- P_n (A )) \\
 & \leq \left( \sqrt{4 n P_n(A)\log\left( \frac{ 3\mathbb  S_{\mathcal A}(2n)}{\delta}\right)}  +   4 \log\left( \frac{ 3 \mathbb  S_{\mathcal A}(2n)}{\delta}\right) \right) \wedge  \sqrt{4 n P(A)\log\left( \frac{ 3 \mathbb  S_{\mathcal A}(2n)}{\delta}\right)}.
\end{align*}
\end{corollary}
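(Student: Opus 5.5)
The plan is to invert each tail inequality of Theorem~\ref{theorem:th1} at the level $\delta$, and then convert the resulting relative-deviation bound into the two forms inside the minimum. The second form comes directly from inversion. The first form comes from solving a quadratic inequality in $\sqrt{P_n(A)}$ or in $\sqrt{P(A)}$.

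\textbf{First inequality.} Set $L=\log(\mathbb S_{\mathcal A}(2n)/\delta)$ and $t=\sqrt{4L/n}$, so that $\mathbb S_{\mathcal A}(2n)\exp(-nt^2/4)=\delta$. By the first bound of Theorem~\ref{theorem:th1}, with probability at least $1-\delta$ we have, for all $A\in\mathcal A$,
\[
P_n(A)-P(A)\le t\sqrt{P_n(A)}.
\]
The case $P_n(A)=0$ is covered by the convention $0/0=0$, since then the left side is $-P(A)\le 0$ anyway. Multiplying by $n$ gives
\[
n(P_n(A)-P(A))\le\sqrt{4nP_n(A)L},
\]
which is the second term of the minimum.

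For the first term, set $x=\sqrt{P_n(A)}$. The inequality becomes $x^2-tx-P(A)\le 0$, hence
\[
x\le \frac{t+\sqrt{t^2+4P(A)}}{2}.
\]
Substituting this bound on $x$ back gives
\[
P_n(A)-P(A)\le tx\le \frac{t^2}{2}+\frac{t\sqrt{t^2+4P(A)}}{2}.
\]
Using $\sqrt{a+b}\le\sqrt a+\sqrt b$, the right side is at most
\[
t^2+t\sqrt{P(A)}.
\]
Multiplying by $n$, and noting $nt^2=4L$ and $nt\sqrt{P(A)}=\sqrt{4nP(A)L}$, we obtain the first term. Both bounds hold on the same event, so their minimum holds as well.

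\textbf{Second inequality.} Set $L'=\log(3\mathbb S_{\mathcal A}(2n)/\delta)$ and $t=\sqrt{4L'/n}$, and apply the second bound of Theorem~\ref{theorem:th1}. With probability at least $1-\delta$, for all $A\in\mathcal A$,
\[
P(A)-P_n(A)\le t\sqrt{P(A)},
\]
which directly yields the $\sqrt{4nP(A)L'}$ term.

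Now set $y=\sqrt{P(A)}$. The inequality reads $y^2-ty-P_n(A)\le 0$, so
\[
y\le \frac{t+\sqrt{t^2+4P_n(A)}}{2}.
\]
The same computation as above gives
\[
P(A)-P_n(A)\le t^2+t\sqrt{P_n(A)},
\]
which is the remaining term.

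\textbf{Possible obstacle.} The only delicate point is the degenerate cases $P_n(A)=0$ or $P(A)=0$ under the convention $0/0=0$. In each such case the relevant deviation is nonpositive, so every bound holds trivially. No other step is nontrivial: the proof reduces to elementary algebra once $t$ is calibrated so that the right-hand side of Theorem~\ref{theorem:th1} equals $\delta$.
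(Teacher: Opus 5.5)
Your proposal is correct and follows the paper's proof: you calibrate $t=\sqrt{4L/n}$ (resp. $t=\sqrt{4L'/n}$) so that the right-hand side of Theorem~\ref{theorem:th1} equals $\delta$, read off the square-root term directly, and solve a quadratic to get the other term of the minimum. The only cosmetic difference is that the paper solves the quadratic in $X=n(P_n(A)-P(A))$ rather than in $\sqrt{P_n(A)}$. Both routes give the same intermediate bound $2L+\sqrt{4L^2+4nP(A)L}\le 4L+\sqrt{4nP(A)L}$.
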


The bounds account for low-probability sets $A$: the upper bound becomes smaller when the probability (either empirical or population) is small. They involve explicit constants and quantities available in practice. For instance, we have with probability $1-2\delta$,
$$ n| P_n(A)- P (A )|   \leq  \sqrt{4 n P_n(A)\log\left( \frac{ 3 \mathbb  S_{\mathcal A}(2n)}{\delta}\right)} + 4 \log\left( \frac{ 3 \mathbb  S_{\mathcal A}(2n)}{\delta}\right)  .$$
The upper bound is fully available as soon as an estimate on the shatter coefficient is given. This type of inequality, obtained for instance from the inequality of \cite{anthony1993result}, has proved useful in several statistical learning problems; see the references given in the introduction.

\section{A VC inequality for functions}\label{sec:main_results2}

We now consider the case of real valued functions. Let 
$ \mathcal F$ be a countable collection of $[0,1]$-valued functions defined on $S$. The complexity of $\mathcal F$ is controlled by the shatter coefficient of the class of subgraphs, defined as
$$  \mathrm{sg} (\mathcal F) = \{ \{ (x,y) \in S\times [0,1] \, :\,  y\leq f(x)   \}\,:\, f\in \mathcal F \}.$$ 
The next theorem extends Theorem \ref{theorem:th1} to class of functions (instead of sets). This is done by writing $f(x) $ as an expectation $\int \mathds{1}_{y\leq f(x)}\,\mathds{1}_{[0,1]}(y)\,\mathrm{d}y$ and applying twice the symmetrization used in the proof of Theorem \ref{theorem:th1}. 
The change we need is a positive shift $\gamma>0$ of the error bound making the deviation slightly weaker compared  to that of Theorem \ref{theorem:th1}.

\begin{theorem}\label{theorem:th2}
Let $n\geq 1$ and $(Z_1,\dots,Z_n)$ be independent and identically distributed random variables with distribution $P$ on $S$. Let $\mathcal{F}$ be a countable class of measurable functions on $S$ such that $0\leq f(z)\leq 1$ for all $f\in \mathcal F$ and $z\in S$. We have, for all $t>0$ and $\gamma>0$ such that $t n \sqrt{\gamma} \geq 16 / 9$,
$$ \mathbb P \left(  \sup_{f\in \mathcal F}    \frac{P_n(f) -P (f) -\gamma }{\sqrt{ P_n(f)     }}  > t  \right) \leq \mathbb  S_{\mathrm{sg} (\mathcal F)}(2n)\exp\left(-\frac{nt^2} 4\right),$$
and
$$ \mathbb P \left(  \sup_{f\in \mathcal F}   \frac{P(f) -P _n (f) -\gamma }{\sqrt{   P (f )    }}   > t  \right) \leq \mathbb  S_{\mathrm{sg} (\mathcal F)}(2n)\exp\left(-\frac{nt^2} 4\right).$$
\end{theorem}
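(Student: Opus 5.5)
We reduce the function class to a class of sets on the enlarged space $S\times[0,1]$ and then repeat, conditionally, the symmetrization-plus-maximal-inequality scheme used for Theorem \ref{theorem:th1}. Let $U_1,\dots,U_n$ be i.i.d.\ uniform on $[0,1]$, independent of the $Z_i$, and set $W_i=(Z_i,U_i)$ with law $Q=P\otimes\mathrm{Leb}$. For each $f\in\mathcal F$ write $A_f=\mathrm{sg}(f)$. Then $f(z)=\mathbb E[\mathds 1_{A_f}(z,U)]$, so $Q(A_f)=P(f)$. Moreover the empirical quantity $Q_n(A_f)=n^{-1}\sum_i \mathds 1\{U_i\le f(Z_i)\}$ has conditional mean $P_n(f)$ given the $Z$'s. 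Thus $P_n(f)$ is itself an average over an auxiliary uniform sample, which is the "second symmetrization" the paper alludes to.

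\textbf{Step 1 (first tail).} Let $B$ be the event that the supremum exceeds $t$, and let $f^*$ be a (measurable, by countability) choice of $f$ realizing $P_n(f)-P(f)-\gamma>t\sqrt{P_n(f)}$. I would compare $P_n(f)$ with $Q_n(A_f)$ through a ghost uniform sample. Conditionally on $Z$, the variable $nQ_n(A_{f})$ is a sum of independent Bernoulli$(f(Z_i))$ variables with mean $nP_n(f)$. A Chebyshev/Bernstein lower-tail bound shows that, with conditional probability at least a fixed fraction, $Q_n(A_{f^*})\ge P_n(f^*)-\gamma'$, where $\gamma'$ is of order $\sqrt{P_n/n}$ and is absorbed by $\gamma$. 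This is where the condition $tn\sqrt\gamma\ge 16/9$ enters: it guarantees that the binomial fluctuation scale is dominated by $\gamma$ plus $t\sqrt{P_n}$.

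The cleaner route is to use the Jensen-type symmetrization principle from the proof of Theorem \ref{theorem:th1}. For fixed $f$, the map $p\mapsto (p-c-\gamma)/\sqrt p$ is increasing and concave-ish on the relevant region, so
\[
\sup_f \frac{P_n(f)-P(f)-\gamma}{\sqrt{P_n(f)}}\le \mathbb E_U\Big[\sup_f \frac{Q_n(A_f)-Q(A_f)-\gamma''}{\sqrt{Q_n(A_f)}}\Big]
\]
up to a correction from the lack of convexity. The shift $\gamma$ is precisely what pays for that correction, and $16/9$ comes from optimizing a quadratic inequality in $\sqrt{P_n}$. Once reduced, one applies the set result, i.e.\ the argument of Theorem \ref{theorem:th1} on $S\times[0,1]$ with class $\mathrm{sg}(\mathcal F)$. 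That argument yields the factor $\mathbb S_{\mathrm{sg}(\mathcal F)}(2n)e^{-nt^2/4}$ after the ghost-sample symmetrization in $W$, Rademacher signs, and the maximal sub-Gaussian bound over the at most $\mathbb S(2n)$ traces on $2n$ points.

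\textbf{Step 2 (second tail).} Here the normalization $\sqrt{P(f)}=\sqrt{Q(A_f)}$ is already a population quantity, so only the empirical side needs to be lifted. Since $P_n(f)=\mathbb E_U Q_n(A_f)$, convexity of $x\mapsto \mathds 1\{\cdot\}$ is not available. Instead I would use the exponential-moment version: bound $\mathbb E\exp(\lambda\sup_f(\cdot))$ and use Jensen on $\exp$ to pass from $P_n$ to $Q_n$. This is where the $\gamma$ shift replaces the event $\{P_n(A)=0\}$ that cost the factor $3$ in Theorem \ref{theorem:th1}, giving the constant $1$.

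\textbf{Main obstacle.} The delicate part is the non-convexity of the relative normalization when $P_n(f)$ is replaced by its randomized version $Q_n(A_f)$. One needs a deterministic inequality of the form $(p-c-\gamma)_+/\sqrt p\le$ a convex function of $p$ that matches at the relevant points. I would first try the tangent-line majorant at $p_0$ solving $p_0-c-\gamma=t\sqrt{p_0}$, and check that the error is covered when $tn\sqrt\gamma\ge16/9$.
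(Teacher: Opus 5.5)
\textbf{Verdict: the proposal has a genuine gap.} The step that carries the theorem is never supplied: passing from $P_n(f)=\mathbb E_U Q_n(A_f)$ to $Q_n(A_f)$ inside a relative deviation without losing a constant. None of your sketches closes it.

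Several parts are right and match the paper. Your lifting device ($A_f=\mathrm{sg}(f)$, $Q(A_f)=P(f)$, $\mathbb E_U Q_n(A_f)=P_n(f)$) is the one used there. So is your endgame: ghost sample, Rademacher swap, union over at most $\mathbb S_{\mathrm{sg}(\mathcal F)}(2n)$ traces, sub-Gaussian bound. Your guess that $\gamma$ takes over the role of the event $\{P_n(A)=0\}$ from Theorem~\ref{theorem:th1} is also correct.

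Why each sketch fails:
\begin{itemize}
\item The conditional-probability route (``with conditional probability at least a fixed fraction'') costs the reciprocal of that fraction in front of $\mathbb S_{\mathrm{sg}(\mathcal F)}(2n)$. It therefore cannot give the constant $1$.
\item The displayed ratio-level inequality is false, not merely off by a correction. The map $p\mapsto(p-c-\gamma)/\sqrt p$ is concave, so Jensen goes the wrong way. Worse, $Q_n(A_f)$ has an atom at $0$ where the ratio is $-\infty$. For $\mathcal F=\{f\}$, $f\equiv1/2$, $n=1$ and any $\gamma''\ge0$, the left side is $-\gamma\sqrt2$ and the right side is $-\infty$.
\item The tangent-line majorant replaces the ratio by an affine function of $Q_n(A_f)$ with an $f$-dependent slope. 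This gives up the self-normalization that makes $\sum_i(v_i-v_i')^2\le\sum_i(v_i+v_i')$ cancel in the sub-Gaussian step.
\item The constant $16/9$ comes neither from a quadratic inequality in $\sqrt{P_n}$ nor from binomial fluctuations.
\end{itemize}

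Your Step 2 points in the right direction with exponential moments, but ``Jensen on $\exp$'' alone does nothing. What you need is convexity of the exponential of the ratio itself, after first symmetrizing the denominator.

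\emph{Symmetrize and apply Chernoff.} For any $f$ in the event, the numerator is positive, so $P_n(f)\ge(P_n(f)+P(f))/2$. Hence the event lies in $\{\sup_f\lambda(P_n(f)-P(f)-\gamma)/\sqrt{P_n(f)+P(f)}>nt^2/2\}$ with $\lambda=tn/\sqrt2$. By Markov it then suffices to show
$\mathbb E\sup_f\exp\bigl(\lambda(P_n(f)-P(f)-\gamma)/\sqrt{P_n(f)+P(f)}\bigr)\le\mathbb S_{\mathrm{sg}(\mathcal F)}(2n)e^{nt^2/4}$.

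\emph{The key convexity.} Although $u\mapsto(u-b)/\sqrt u$ is concave, $x\mapsto\exp\bigl(\lambda(x-a-\gamma)/\sqrt{x+a}\bigr)$ is convex on $[0,\infty)$ for every $a\ge0$ once $\lambda\sqrt\gamma\ge8\sqrt2/9$. Indeed, with $h(u)=(u-b)/\sqrt u$ and $b=2a+\gamma$, the criterion $h''+\lambda h'^2\ge0$ holds once $\lambda\sqrt b\ge\sqrt2\max_{v\ge0}(v+6)\sqrt v/(v+2)^2\approx1.10$ (Lemma~\ref{lemma_conv}(iii)). With $\lambda=tn/\sqrt2$ this is exactly $tn\sqrt\gamma\ge16/9$. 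The exponential also sends the atom at $0$ to the harmless value $0$.

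\emph{Two Jensen steps.} First apply Jensen in $U$ with $a=P(f)$ fixed, then drop $\gamma$. Next apply Jensen in a ghost sample $(Z',U')$, using convexity of $x\mapsto\exp\bigl(\lambda(a-x)/\sqrt{a+x}\bigr)$, which holds for all $\lambda>0$ and $a\ge0$ (Lemma~\ref{lemma_conv}(i)). This lifts \emph{both} $P_n(f)$ and $P(f)$. It yields the denominator $\sqrt{Q_n(A_f)+Q_n'(A_f)}$, which is invariant under $W_i\leftrightarrow W_i'$ as the Rademacher step requires. Your reduction keeps $\sqrt{Q_n(A_f)}$, which is not invariant.

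\emph{Second tail.} Exchange the roles. Use (iii) in the ghost sample, where $P(f)$ enters with a plus sign, while $\gamma$ is still present, and (i) afterwards in $U$. Dropping $\gamma$ before the ghost step would reintroduce the failure of convexity of $x\mapsto\exp(\lambda(x-a)/\sqrt{x+a})$ near $x=0$ when $a=0$. That failure is what forced the split on $\{P_n(A)=0\}$ and the factor $3$ in Theorem~\ref{theorem:th1}.
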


The above inequality can be seen as a uniform Bernstein type inequality. For instance, when choosing $\gamma = t^2$ and $t= \sqrt {4u/n}$, and assuming that $u \geq 4/9$, we obtain
$$\mathbb P \left(  \exists f\in \mathcal F \ \,  \  P_n(f) -P (f)   > \sqrt{\frac{4u P_n(f)}{n} }   +\frac{4u}{n}   \right) \leq \mathbb  S_{\mathrm{sg} (\mathcal F)}(2n)\exp(- u).$$
A similar statement for a single function $f$, based on the standard Bernstein inequality, is given in \cite{hsu_12}. The above statement shows that, leaving aside the constants, the price to pay for uniformity is the multiplicative factor $\mathbb  S_{\textrm{sg} (\mathcal F)}(2n)$ in the probability bound. Finally, another related inequality is given in \cite{MaurerPontil2009}, Theorem 6, where a similar data dependent deviation bound is established for all $f\in \mathcal F$, as soon as the complexity of $\mathcal F $ is suitably controlled by $L_\infty$-covering numbers. We point-out that such a restriction is not well adapted to the kernel density estimation context studied in the next section because the bandwidth parameter impact the $L_\infty$-covering numbers.  As before, we now state a corollary providing upper bounds involving either $P_n$ or $P$, depending on the situation.

\begin{corollary}\label{theorem:cor2}
In the setting of Theorem \ref{theorem:th2}, let $\delta \in (0, \exp(-1/2)]$ and set $\ell_\delta (n,\mathcal F)= \log\left( { \mathbb  S_{\textrm{sg} (\mathcal F)}(2n)}/{\delta}\right) $.  We have, with probability $1-\delta$, for all $f\in \mathcal F$
\begin{align*}
&n(P_n(f)- P (f )) \leq \\
&\left( \sqrt{4 n P(f)  \ell_\delta (n,\mathcal F)}  +   12 \ell_\delta (n,\mathcal F) \right) \wedge    \left( \sqrt{4 n P_n(f) \ell_\delta (n,\mathcal F) }  +   4  \ell_\delta (n,\mathcal F) \right) . 
\end{align*}
We have, with probability $1-\delta$, for all $f\in \mathcal F$
\begin{align*}
&n(P(f)- P_n (f )) \leq \\
&\left( \sqrt{4 n P_n(f)  \ell_\delta (n,\mathcal F)}  +   12 \ell_\delta (n,\mathcal F) \right) \wedge    \left( \sqrt{4 n P(f) \ell_\delta (n,\mathcal F) }  +   4  \ell_\delta (n,\mathcal F) \right) . 
\end{align*}

\end{corollary}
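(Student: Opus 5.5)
The plan is to apply Theorem \ref{theorem:th2} once per tail with the Bernstein-type choice $\gamma=t^2$ and $t=\sqrt{4\ell/n}$, where $\ell=\ell_\delta(n,\mathcal F)$. On the resulting good event, one of the two terms in the minimum is the direct reading of the inequality. The other follows by solving a quadratic inequality that exchanges $P_n(f)$ and $P(f)$. Each tail then uses a single event of probability at least $1-\delta$, so both terms of the minimum hold simultaneously.

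\emph{Checking the constraint and setting up the event.} With $t=\sqrt{4\ell/n}$ and $\gamma=t^2=4\ell/n$, we have $tn\sqrt\gamma = nt^2 = 4\ell$. Since $\mathbb S_{\mathrm{sg}(\mathcal F)}(2n)\ge 1$ and $\delta\le e^{-1/2}$, we get $\ell\ge 1/2$, so $4\ell\ge 2\ge 16/9$ and Theorem \ref{theorem:th2} applies. Its right-hand side equals $\mathbb S_{\mathrm{sg}(\mathcal F)}(2n)e^{-\ell}=\delta$. Hence with probability at least $1-\delta$, for all $f\in\mathcal F$,
\[
P_n(f)-P(f)\le \gamma + t\sqrt{P_n(f)}.
\]
If $P_n(f)=0$, this holds trivially under the convention $0/0=0$, since then the left-hand side is $-P(f)\le\gamma$. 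Multiplying by $n$ gives $n(P_n(f)-P(f))\le \sqrt{4nP_n(f)\ell}+4\ell$, which is the second term of the minimum.

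\emph{Inverting to obtain the population version.} On the same event, set $x=\sqrt{P_n(f)}$. Then $x^2-tx-(P(f)+\gamma)\le 0$, so
\[
x\le \tfrac t2+\sqrt{\tfrac{t^2}{4}+P(f)+\gamma}\le t+\sqrt{P(f)}+\sqrt\gamma = \sqrt{P(f)}+2\sqrt{4\ell/n},
\]
using $\sqrt{a+b}\le\sqrt a+\sqrt b$. Substituting into $\sqrt{4n\ell P_n(f)}$ gives $\sqrt{4n\ell P_n(f)}\le \sqrt{4n\ell P(f)}+8\ell$. Therefore $n(P_n(f)-P(f))\le\sqrt{4nP(f)\ell}+12\ell$, which is the first term of the minimum, with the constant $12=4+8$. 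This proves the first display.

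\emph{The left tail.} The second display is proved identically from the second inequality of Theorem \ref{theorem:th2} with the same $t,\gamma$. Directly, it gives $n(P(f)-P_n(f))\le\sqrt{4nP(f)\ell}+4\ell$. Solving the quadratic in $\sqrt{P(f)}$ gives $\sqrt{P(f)}\le\sqrt{P_n(f)}+2\sqrt{4\ell/n}$, hence the bound $\sqrt{4nP_n(f)\ell}+12\ell$.

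The only delicate points are verifying that the constraint $tn\sqrt\gamma\ge 16/9$ follows from $\delta\le e^{-1/2}$, and keeping the constants tight in the quadratic inversion. Both are elementary.
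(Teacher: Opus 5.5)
Your proposal is correct and follows essentially the same route as the paper: the same choice $\gamma=t^2$, $t=\sqrt{4\ell_\delta(n,\mathcal F)/n}$, the same check $tn\sqrt{\gamma}=4\ell_\delta(n,\mathcal F)\ge 2\ge 16/9$, and the $4\ell_\delta(n,\mathcal F)$ term read off directly from Theorem \ref{theorem:th2}. The $12\ell_\delta(n,\mathcal F)$ term comes from a quadratic inversion on the same event, and the only cosmetic difference is the variable: you solve in $\sqrt{P_n(f)}$ (resp.\ $\sqrt{P(f)}$), whereas the paper solves in $X=n(P_n(f)-P(f))-4\ell_\delta(n,\mathcal F)$; both give the constant $12$.
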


The assumption that $f$ is valued in $[0,1]$ can easily be relaxed to $f$ taking values in $[0,U]$ by changing slightly the obtained bound. The bound can also be extended to functions that may not be nonnegative provided the lower bound $f\geq -b$ holds. Applying to the translation $f+b$, we obtain
an upper-bound with a scaling factor $ P_n(f) + b$  instead of $P_n(f)$. We acknowledge that this might be detrimental when $P_n(f)$ is small compared to $b$. Relative deviation bounds for classes of functions, possibly unbounded, are investigated in \cite{cortes2019relative}.

\section{Application to kernel density estimation}\label{sec:applications}

The previous results are particularly powerful when the function class has ``nice'' subgraphs, in particular, subgraphs with small VC dimension. The VC dimension of $\mathcal A$ is defined as the largest integer $v$ such that $ \mathbb S_{\mathcal A}(v) = 2^v $ (i.e., the largest set of $v$ points that can be shattered). For such a class, the shatter coefficient grows polynomially: by the Sauer--Shelah lemma, $\mathbb S_{\mathcal A}(n) \leq (n+1)^v$ for all $n\geq 1$; see, e.g., \cite[Theorem~13.3]{devroye2013probabilistic}. This fact will be used repeatedly in this section.

Let $X_1,\ldots, X_n$ be a collection of random variables on $\mathbb R $ with common distribution $P$. We suppose further that $P$ admits a density $f_X$ with respect to Lebesgue measure. The kernel density estimator of $f_X$ is given by, for any $x\in \mathbb R$,
$$\hat f_n (x) = n^{-1} \sum_{i=1}^n K_h(x-X_i),$$
with $K_h (u) = h^{-1} K( u / h) $ for any $u\in \mathbb R$, where $h>0$ is called the bandwidth. The function $K:\mathbb R \to \mathbb R_{\geq 0}  $ is called the kernel and is often chosen as a density function (Gaussian, uniform, Epanechnikov, ...). Let $\ast $ be the standard convolution product: $(g_1\ast g_2) (x) = \int g_1(x-y) g_2(y) dy$. Define, for all $x\in \mathbb R$,
$$f_h(x) = (f_X\ast K_h) (x) .$$
While the full treatment of $\hat f_n$ would require the traditional bias-variance decomposition, we focus on bounding the variance part $\hat f_n (x)- f_h(x)$. We are interested in obtaining deviation bounds that are valid for all $n\geq 1$ and any choice of the bandwidth $h>0$. 

In what follows, we introduce two nondecreasing functions $F$ and $G$ taking values in
$[0,1]$. We control the complexity of the class of kernel functions by
assuming that, for any $u\in\mathbb R$,
\begin{align}\label{eq:kernel}
K(u) = F(u)\,\mathds 1_A(u) + (1-G(u))\,\mathds 1_{A^c}(u),
\end{align}
for a given set $A=\cup_{j=1}^q(a_j,b_j]$ with
$-\infty=a_1<b_1<a_2<b_2<\cdots<a_q<b_q<+\infty$ and for $A^c$ denoting the complementary of $A$. In this way, $F$ represents the increasing part of $K$ while
$1-G$ represents the decreasing part. One standard example is the Gaussian
kernel, obtained with $A=(-\infty,0]$, $F(u)=\exp(-u^2/2)$ and
$1-G(u)=\exp(-u^2/2)$. In this case, note that the values of $F$ (resp.\ $G$) on
$(0,+\infty)$ (resp.\ $(-\infty,0]$) do not affect $K$ and may be chosen so
that monotonicity holds on the whole real line, e.g., $F(u)=1$ for $u>0$ and
$G(u)=0$ for $u\le 0$. The next lemma shows that the shatter coefficient of
the associated class grows polynomially in $n$, as it does for a class with
finite VC dimension.

\begin{lemma}\label{lemma:comp}
Suppose that \eqref{eq:kernel} is fulfilled with $F$ and $G$ nondecreasing on
$\mathbb R$. Then, for any $h>0$, the set
$\Psi=\{z\mapsto K(h^{-1}(x-z))\,:\,x\in\mathbb R\}$ is such that, for all
$n\ge 1$,
\[
\mathbb S_{\operatorname{sg}(\Psi)}(n)\le (n+1)^{4q+2}.
\]
\end{lemma}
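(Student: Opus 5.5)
Fix $h>0$ and points $(z_1,y_1),\ldots,(z_n,y_n)\in\mathbb R\times[0,1]$. The goal is to count the distinct patterns $\big(\mathds 1\{y_i\le K(h^{-1}(x-z_i))\}\big)_{i\le n}$ as $x$ ranges over $\mathbb R$. The approach is to split the real line of $x$ into at most $n(4q+2)+1$ intervals on each of which the pattern is constant. This gives a bound of order $n(4q+2)+1\le (n+1)^{4q+2}$, which is even stronger than what is needed. Since $\mathbb S$ is a maximum over point configurations, it is enough to carry this out for an arbitrary fixed configuration.

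For a fixed index $i$, set $u=h^{-1}(x-z_i)$. As $x$ increases, $u$ increases, and $x\mapsto u$ is an increasing bijection. We need to describe the set $E_i=\{u: y_i\le K(u)\}$. By \eqref{eq:kernel}, $E_i=(\{u\in A: F(u)\ge y_i\})\cup(\{u\in A^c: G(u)\le 1-y_i\})$. Because $F$ is nondecreasing, $\{F\ge y_i\}$ is a half-line $[c,\infty)$ or $(c,\infty)$, possibly empty or all of $\mathbb R$. Because $G$ is nondecreasing, $\{G\le 1-y_i\}$ is a half-line $(-\infty,d]$ or $(-\infty,d)$. Intersecting the first half-line with each interval $(a_j,b_j]$ of $A$ gives one interval per $j$, so $q$ intervals in total. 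The complement $A^c$ is a union of at most $q$ intervals $(b_j,a_{j+1}]$ together with $(b_q,\infty)$. Intersecting these with the second half-line gives at most $q$ intervals. Altogether $E_i$ is a union of at most $2q$ intervals, so $\mathds 1_{E_i}$ changes value at no more than $4q$ boundary points in $u$. As a safe margin that covers the open-versus-closed endpoint issues, I will count at most $4q+2$ breakpoints. Mapping back through $x=z_i+hu$, the function $x\mapsto\mathds 1\{y_i\le K(h^{-1}(x-z_i))\}$ is piecewise constant with at most $4q+2$ breakpoints.

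Now combine all $n$ indices. Taking the union of all breakpoints yields at most $N=n(4q+2)$ points. On each open interval between consecutive breakpoints every coordinate is constant, and each breakpoint itself gives one further pattern. The number of distinct patterns is therefore at most $2N+1\le 2n(4q+2)+1$. It remains to verify $2n(4q+2)+1\le (n+1)^{4q+2}$. By the binomial expansion with $4q+2\ge 2$, the right-hand side is at least $1+(4q+2)n+\binom{4q+2}{2}n^2$, and this dominates the left-hand side because $\binom{4q+2}{2}n^2\ge (4q+2)n$ whenever $4q+1\ge 2$, i.e., for every $q\ge1$.

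The main obstacle is the bookkeeping for the level sets when $F$ or $G$ have jumps or flat pieces. This is exactly where open versus closed half-lines, empty sets, or all of $\mathbb R$ can occur, and it is why the counting argument should be phrased in terms of endpoints. In every one of these cases the key fact still holds: each level set of a monotone function is a single half-line. Because of this, the breakpoint count above remains valid regardless of which of these degenerate situations arises.
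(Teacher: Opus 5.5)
Your proof is correct, and it takes a genuinely different route from the paper's.

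\textbf{The paper's route.} It works at the level of set classes. It writes each subgraph as a union of two intersections: a class of unions of $q$ intervals (or their complements) intersected with a level-set class of $F$ (or of $G$). It then multiplies shatter coefficients using the union/intersection rule (Theorem~13.5 of \cite{devroye2013probabilistic}). The interval factors are bounded by $(n+1)^{2q}$ through Sauer's lemma. The level-set factors are bounded by $n+1$, because those classes are linearly ordered by inclusion.

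\textbf{Your route.} You use directly the fact that the class is indexed by the single real parameter $x$. For each point $(z_i,y_i)$, membership as a function of $x$ is the indicator of a union of at most $2q$ intervals. So the $n$ coordinates jointly change value at no more than $N\le 4qn$ points, and at most $2N+1$ patterns occur.

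\textbf{What each buys.} Your argument is more elementary, with no Sauer lemma and no product rule. It also gives a bound linear in $n$, which is much stronger than $(n+1)^{4q+2}$. It applies verbatim under the weaker hypotheses of Lemma~\ref{lemma:comp1}: when $F$ is only monotone on $(a_j,b_j]$, the set $\{u\in(a_j,b_j]:F(u)\ge y\}$ is still an interval. Your count would therefore also sharpen the logarithmic factor in the first part of Theorem~\ref{th:density}. The paper's modular template, on the other hand, is what carries over to the two-parameter class of Lemma~\ref{lemma:comp2}. There, membership of a point is governed by a region of the $(x,h)$-plane rather than by breakpoints on a line.

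\textbf{Two small remarks.}
\begin{enumerate}
\item The ``safe margin'' of $4q+2$ breakpoints is unnecessary. Open versus closed endpoints are already absorbed by counting each breakpoint as its own cell in $2N+1$. In fact each coordinate has at most $2q+1$ breakpoints in $u$: the $2q-1$ finite endpoints of $A$, plus one threshold each for $F$ and $G$.
\item Your opening sentence counts ``$n(4q+2)+1$ intervals''. This should read $2N+1$ cells, as you correctly have later.
\end{enumerate}
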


It is possible to extend the previous lemma to the cases where (i) neither
$F$ nor $G$ is monotone on the full real line, and
(ii) the class is indexed by the bandwidth $h>0$ as well (not only by $x$ as in Lemma \ref{lemma:comp}).
For the first extension (i), we shall only assume that $F$ is nondecreasing on
each interval of $A$ while $1-G$ is nonincreasing on each interval of $A^c$.
This comes at the cost of an increase of the shatter coefficient upper bound.

\begin{lemma}\label{lemma:comp1}
Suppose that \eqref{eq:kernel} is fulfilled with $F$ nondecreasing on each
interval of $A$ and $G$ nondecreasing on each interval of $A^c$. Then, for
any $h>0$, the set $\Psi=\{z\mapsto K(h^{-1}(x-z))\,:\,x\in\mathbb R\}$ is
such that, for all $n\ge1$,
\[
\mathbb S_{\operatorname{sg}(\Psi)}(n)\le (n+1)^{6q}.
\]
\end{lemma}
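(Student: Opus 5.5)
Fix $h>0$. The subgraph of $z\mapsto K(h^{-1}(x-z))$ is
\[
\{(z,y): y\le K(h^{-1}(x-z))\}=\bigcup_{j}\bigl\{(z,y): u\in(a_j,b_j],\ y\le F(u)\bigr\}\cup\bigcup_j\bigl\{(z,y): u\in(b_j,a_{j+1}],\ y\le 1-G(u)\bigr\},
\]
with $u=h^{-1}(x-z)$ and the convention $a_{q+1}=+\infty$. This is a union of $2q$ pieces. I will bound the number of traces each piece can cut out on $n$ points, and then multiply the counts: the trace of the union is determined by the tuple of traces of the pieces.

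Take the piece for an interval $I=(a_j,b_j]$ of $A$. The condition $u\in I$ reads $x-hb_j\le z< x-ha_j$, so as $x$ varies it cuts out a window of fixed length (possibly a half-line). Its trace on $z_1,\dots,z_n$ is a set of consecutive points in sorted order, and only the two endpoints $x-hb_j$ and $x-ha_j$ matter. The condition $y\le F(u)$ is the key point and uses that $F$ is nondecreasing only on $I$, not on all of $\mathbb R$. I would modify $F$ outside $I$ to a function $\tilde F$ that is nondecreasing on $\mathbb R$: set $\tilde F=\inf_I F$ to the left of $I$ and $\tilde F=\sup_I F$ to the right. This leaves the piece unchanged. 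Then $\{y\le \tilde F(h^{-1}(x-z))\}$, for fixed sample points $(z_i,y_i)$, is monotone in $x$. As $x$ increases, each $u_i$ increases, so the indicator switches from $0$ to $1$ at most once. Hence this family has a nested trace on the $n$ points, giving at most $n+1$ patterns. The interval constraint involves two thresholds in $x$, each monotone, so it gives at most $n+1$ patterns for each endpoint. Overall each piece yields at most $(n+1)^3$ traces. The same argument applies to the pieces on $A^c$ with $1-G$, which is nonincreasing on each interval; there the indicator switches once in the other direction.

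Multiplying over the $2q$ pieces gives $(n+1)^{6q}$. A sharper count is available because everything is indexed by the single real parameter $x$: all indicators are determined by the position of $x$ among at most $3\cdot 2q\cdot n$ threshold values. That gives at most $6qn+1$ patterns, which is also $\le (n+1)^{6q}$. Either route suffices.

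The main subtlety is making the monotone-switch claim rigorous when $F$ is only defined or monotone on $I$, including the boundary conventions $\le$ versus $<$ at the endpoints. Extending by the infimum and supremum handles this. The half-open intervals, together with the fact that the edge intervals are half-lines because $a_1=-\infty$, only reduce the number of thresholds.
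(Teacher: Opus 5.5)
Your main argument is essentially the paper's proof: you use the same splitting of the subgraph into $2q$ window-intersect-monotone pieces and the same clamped nondecreasing extensions of $F$ and $G$ on each interval, obtain a factor $n+1$ from the nested family and $(n+1)^2$ from the window, and take the product over pieces to get $(n+1)^{6q}$; where the paper invokes VC dimension and Sauer's lemma you count chains of traces directly, which changes nothing. One small correction to your aside: $m$ threshold values cut the line into up to $2m+1$ cells (the thresholds and the gaps between them), and since $F$ and $G$ are not assumed semicontinuous the pattern at a threshold can differ from the patterns on either side, so that route gives $12qn+1$ rather than $6qn+1$, which is still at most $(n+1)^{6q}$ and does not affect the conclusion.
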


The upper bounds $(n+1)^{4q+2}$ (Lemma~\ref{lemma:comp}) and $(n+1)^{6q}$
(Lemma~\ref{lemma:comp1}) coincide only when $q=1$, in which case both equal
$(n+1)^6$ and the two settings reduce to the same one. For the second extension (ii), we need in addition that $F$ is
\emph{left-continuous} on each interval of $A$ while $G$ is
\emph{right-continuous} on each interval of $A^c$. These semicontinuity
conventions \citep[Section 7]{rockafellar1997convex} are the ones under which the sublevel sets $\{v\,:\,F(v)\le y\}$
and the superlevel sets $\{v\,:\,G(v)\ge u\}$ are closed (within each
interval); this is precisely what is used in the proof of
Lemma~\ref{lemma:comp2} below. Note that the convention on $G$ is the usual
one for cumulative distribution functions, while the one on $F$ is the
left-continuous variant; jumps are allowed for both.

\begin{lemma}\label{lemma:comp2}
Suppose that \eqref{eq:kernel} is fulfilled with $F$ nondecreasing and
left-continuous on each interval of $A$ and $G$ nondecreasing and
right-continuous on each interval of $A^c$. The set
$\Psi=\bigl\{z\mapsto K\bigl(h^{-1}(x-z)\bigr)\,:\,x\in\mathbb R,\ h>0\bigr\}$ 
is such that, for all $n\ge1$,
\[
\mathbb S_{\operatorname{sg}(\Psi)}(n)\le (n+1)^{10q}.
\]
\end{lemma}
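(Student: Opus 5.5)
We bound the shatter coefficient by exhibiting, for each fixed sample $(z_1,y_1),\dots,(z_n,y_n)\in\mathbb R\times\mathbb R$, the trace of the subgraph class as an intersection/union of traces of simpler classes, each with polynomially many patterns, and then count via the product rule $\mathbb S_{\mathcal A\sqcap\mathcal B}(n)\le \mathbb S_{\mathcal A}(n)\mathbb S_{\mathcal B}(n)$. Writing $u=h^{-1}(x-z)$, the point $(z,y)$ lies in the subgraph of $z\mapsto K(h^{-1}(x-z))$ iff either $u\in(a_j,b_j]$ for some $j$ and $y\le F(u)$, or $u\in(b_j,a_{j+1}]$ (or $u>b_q$) and $y\le 1-G(u)$. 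So the subgraph is the disjoint union over the $2q$ pieces $I$ (the $q$ intervals of $A$ and the $q$ intervals of $A^c$) of the sets $\{(z,y): h^{-1}(x-z)\in I\}\cap\{(z,y): y\le \phi_I(h^{-1}(x-z))\}$, where $\phi_I$ is $F$ or $1-G$ restricted to $I$. A pattern on the sample is determined by the tuple of patterns of these $2q$ pieces, giving a product bound.

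For each piece, I would count two things. First, the condition $h^{-1}(x-z)\in(a,b]$, i.e. $x-hb\le z< x-ha$, is a pattern cut out by an interval in $z$ whose two endpoints $x-hb$ and $x-ha$ vary jointly with $(x,h)$; since the pattern depends only on which order statistics of the $z_i$ the two endpoints fall between, there are at most $(n+1)^2$ possibilities (each endpoint has $n+1$ slots). Second, on $I$, with $F$ nondecreasing and left-continuous, the condition $y\le F(u)$ is equivalent to $u\ge \tau(y):=\inf\{v\in I: F(v)\ge y\}$ with the infimum attained or not according to the semicontinuity convention; left-continuity makes $\{v: F(v)<y\}$ open relative to $I$, hence $\{F\ge y\}\cap I$ is a closed-in-$I$ interval $[\tau(y),b]\cap I$. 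Thus the condition becomes $x-z\ge h\,\tau(y_i)$, i.e. $z_i\le x - h\tau(y_i)$. Similarly for $1-G$ with right-continuity: $y\le 1-G(u)$ iff $G(u)\le 1-y$ iff $u\le \sigma(y)$, a closed sublevel condition, i.e. $z_i\ge x-h\sigma(y_i)$.

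The main obstacle is this last counting step, since the thresholds $\tau(y_i)$ differ across points so the condition $z_i+h\tau(y_i)\le x$ is a halfspace condition in the two parameters $(x,h)$ (linear in $(x,h)$ for each $i$, with $\tau(y_i)$ possibly $\pm\infty$ handled separately as all-in/all-out). I would bound it by the number of cells of an arrangement of $n$ lines in the $(x,h)$ half-plane, which is at most $1+n+\binom n2\le (n+1)^2$; together with the lines $x-ha_j=z_i$, $x-hb_j=z_i$ for the interval-membership part this could be done jointly: per piece, the relevant sets are determined by sign patterns of $3n$ affine functions of $(x,h)$, but to reach the stated exponent it suffices to bound piece-by-piece: interval membership $(n+1)^2$ via the two endpoint lines and the threshold condition by $(n+1)^3$ crude bound (a cell count of $n$ lines plus boundary-inclusion ambiguities), giving $(n+1)^5$ per piece and $(n+1)^{10q}$ overall after multiplying over the $2q$ pieces. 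I would first try to verify the per-piece exponent $5$ carefully, checking that closedness from the semicontinuity assumptions is exactly what makes each condition a closed halfplane (so boundary points do not create extra patterns beyond the line-arrangement count), and that the degenerate values $\tau(y_i)=\pm\infty$ (when $y_i\le 0$ or $y_i>\sup_I F$) contribute only constant patterns.
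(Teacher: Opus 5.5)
Your architecture is the paper's: split the subgraph into the $2q$ monotone pieces, bound the interval condition $x-hb\le z<x-ha$ by $(n+1)^2$ patterns and the threshold condition by $(n+1)^3$, and multiply to get $(n+1)^{10q}$. The gap is in the threshold step, which you rightly flag as the crux.

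Your claim that left-continuity makes $\{v:F(v)<y\}$ open, so that $\{F\ge y\}\cap I=[\tau(y),b]\cap I$ is closed in $I$, is false. For a nondecreasing $F$, left-continuity makes the sublevel sets $\{F\le y\}$ closed, not the superlevel sets. For example, $F=\mathds{1}_{(0,\infty)}$ is left-continuous while $\{F\ge 1/2\}=(0,\infty)$. Symmetrically, $G=\mathds{1}_{[0,\infty)}$ is right-continuous while $\{G\le 1/2\}=(-\infty,0)$.

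With your orientation $y\le K(u)$, the condition for the $i$-th point is in general $x-h\tau(y_i)-z_i\ge 0$ for some values of $y_i$ and $>0$ for others. For instance, with $F(v)=v/2$ on $[0,1]$ and $F=1$ on $(1,\infty)$, one has $\tau(1/2)=\tau(3/4)=1$, attained only for $y=1/2$. So two sample points can share a line and be separated only on that line. The lines therefore cannot be dismissed by closedness, and your ``$(n+1)^3$ crude bound'' is asserted rather than proved. The paper avoids the issue by writing membership as $K((x-z)/h)\le y$, i.e. $F_j\le y$ and $G_j\ge 1-y$. For these sets the one-sided continuity gives exactly the closed half-lines $(-\infty,L_j(y)]$ and $[R_j(1-y),+\infty)$ that Theorem 13.9 of Devroye, Gy\"orfi and Lugosi requires.

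The repair is short. Membership of point $i$ depends only on the sign in $\{-,0,+\}$ of $g_i(x,h)=x-h\tau(y_i)-z_i$, with its strict or non-strict type fixed by $y_i$. So the number of patterns is at most the number of faces of an arrangement of at most $n$ lines. Such an arrangement has at most $\binom n2$ vertices, $n^2$ edges and $1+n+\binom n2$ regions, hence at most $2n^2+1\le(n+1)^3$ faces.

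With this repair your proof is correct, and it differs from the paper's only in how the threshold classes are counted. The paper passes to generalized inverses, bounds the VC dimension by $3$ through the span of $z$, $L_j(y)$ and $1$, and then applies Sauer's lemma. That route forces every set to have the form $\{g\ge 0\}$, which is exactly where the semicontinuity hypotheses enter. Your dual count in the $(x,h)$-plane is indifferent to strict versus non-strict inequalities, so it uses only monotonicity on each interval and proves the bound without the continuity hypotheses.

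Done globally rather than piece by piece, as you hint for a single piece, your route is also much sharper. Membership of $(z_i,y_i)$ is determined by the signs of at most $4q-1$ affine functions of $(x,h)$: the $2q-1$ finite breakpoints and the $2q$ piecewise thresholds. A single face count then gives $\mathbb S_{\operatorname{sg}(\Psi)}(n)\le 2(4q-1)^2n^2+1$, which is quadratic in $n$ instead of degree $10q$.
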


Combining Lemmas \ref{lemma:comp1} and \ref{lemma:comp2} with Corollary \ref{theorem:cor2}, we obtain the following statement.

\begin{theorem}\label{th:density}
Let $n\geq 1$, $h>0$, $\delta \in (0, 1/4)$. Let $X_1,\ldots, X_n$ be a collection of independent and identically distributed random variables on $\mathbb R$ with density $f_X$. Suppose that \eqref{eq:kernel} is fulfilled with $F$ nondecreasing on each
interval of $A$ and $G$ nondecreasing on each interval of $A^c$. We have with probability $1-2\delta$, for all $x \in \mathbb R $,
$$ | \hat f_n (x) - f_h(x)  | \leq  \sqrt{ \frac{ 24q    }{nh}   \left(  \hat f_n(x)  \wedge f_h(x) \right) \log\left( \frac{ 3n}{\delta}\right)   } +   \frac{ 72q }{nh} \log\left( \frac{ 3n}{\delta}\right) .$$
If, in addition, $F$ is left-continuous on each interval of $A$ and $G$ is right-continuous on each interval of $A^c$, we have with probability $1-2\delta$, for all (countable) $x \in \mathbb R $ and $h>0$,
$$ | \hat f_n (x) - f_h(x)  | \leq  \sqrt{ \frac{ 40q    }{nh}   \left(  \hat f_n(x)  \wedge f_h(x) \right) \log\left( \frac{ 3n}{\delta}\right)   } +   \frac{ 120q }{nh} \log\left( \frac{ 3n}{\delta}\right) .$$

\end{theorem}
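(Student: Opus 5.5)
We apply Corollary~\ref{theorem:cor2} to the class $\mathcal F = \{z\mapsto K(h^{-1}(x-z))\,:\, x\in\mathbb Q\}$ for the first statement. For the second statement we use $\{z\mapsto K(h^{-1}(x-z))\,:\,x\in\mathbb Q,\ h\in\mathbb Q_{>0}\}$, or any countable index set as the statement allows. These functions are $[0,1]$-valued because $F$ and $G$ take values in $[0,1]$. Moreover, $P_n(g_x) = h\hat f_n(x)$ and $P(g_x) = h f_h(x)$, where $g_x(z)=K(h^{-1}(x-z))$.

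\textbf{Step 1.} A subclass has no larger shatter coefficient, so Lemma~\ref{lemma:comp1} gives $\mathbb S_{\mathrm{sg}(\mathcal F)}(2n)\le (2n+1)^{6q}$. Lemma~\ref{lemma:comp2} gives $(2n+1)^{10q}$ in the second case. We bound $\ell_\delta(n,\mathcal F) = \log(\mathbb S(2n)/\delta)$ by a multiple of $\log(3n/\delta)$. Since $2n+1\le 3n$ and $\delta<1$,
\[
\log((3n)^{6q}/\delta)\le 6q\log(3n)+\log(1/\delta)\le 6q\log(3n/\delta).
\]
So $\ell_\delta\le 6q\log(3n/\delta)$, and likewise $\ell_\delta\le 10q\log(3n/\delta)$ in the second case. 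The hypothesis $\delta\le e^{-1/2}$ of Corollary~\ref{theorem:cor2} holds because $\delta<1/4$.

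\textbf{Step 2.} Apply both parts of Corollary~\ref{theorem:cor2}, each with probability $1-\delta$. By a union bound they hold simultaneously with probability $1-2\delta$. From the first part we keep the minimum of the two bounds, which gives
\[
n(P_n-P)\le \sqrt{4nP\ell}+12\ell \quad\text{and}\quad n(P_n-P)\le \sqrt{4nP_n\ell}+4\ell\le \sqrt{4nP_n\ell}+12\ell .
\]
Both inequalities together give $n(P_n-P)\le \sqrt{4n(P_n\wedge P)\ell}+12\ell$. The second part gives the same bound for $n(P-P_n)$. Hence
\[
n|P_n(g_x)-P(g_x)|\le \sqrt{4n(P_n\wedge P)\ell}+12\ell .
\]

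\textbf{Step 3.} Divide by $nh$ and substitute $P_n=h\hat f_n(x)$ and $P=hf_h(x)$. This gives
\[
|\hat f_n(x)-f_h(x)|\le \sqrt{\tfrac{4\ell}{nh}(\hat f_n\wedge f_h)}+\tfrac{12\ell}{nh}.
\]
Inserting $\ell\le 6q\log(3n/\delta)$ produces the constants $24q$ and $72q$. Inserting $\ell\le 10q\log(3n/\delta)$ produces $40q$ and $120q$.

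Extending from rational $x$ to all $x\in\mathbb R$ in the first statement needs care. The statement's parenthetical "(countable)" suggests the author restricts to a countable index set, so we work over a countable dense set and note this restriction. The only genuinely fiddly points are the elementary log inequality in Step 1 and taking the minimum of the two forms in Step 2 correctly.
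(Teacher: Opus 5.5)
Your proposal is correct and follows essentially the same proof as the paper. Both apply the two parts of Corollary~\ref{theorem:cor2} to the kernel class, combine them with a union bound to get probability $1-2\delta$, keep the minimum of the two forms with the common additive term $12\ell$, bound $\mathbb S_{\mathrm{sg}(\Psi)}(2n)/\delta$ by $(3n/\delta)^{6q}$ or $(3n/\delta)^{10q}$ via Lemma~\ref{lemma:comp1} or Lemma~\ref{lemma:comp2}, and rescale by $nh$. Your restriction to a countable index set also matches the paper's own remark on how the quantifiers should be read.
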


In Theorem \ref{th:density}, the quantifiers over $x\in \mathbb R$ and $h>0$ should be, rigorously speaking, understood as ranging over a countable dense subset of $\mathbb R \times (0,+\infty)$, in accordance with the countability requirement of Corollary \ref{theorem:cor2}. This is only for ensuring  measurability of the underlying supremum. Point-wise measurability \citep[Example 2.3.4]{wellner1996} might be verified to extend the previous claim to $x\in \mathbb R$ and $h>0$.

The above result illustrates the strength of Theorem \ref{theorem:th2} and Corollary \ref{theorem:cor2}. Several points should be highlighted. First, it holds for any $n\geq 1$ and the constants are explicit. Second, it is valid for all $x \in \mathbb R$ with an upper bound that might be small or large depending on the density level: the higher the density, the larger the upper bound. Third, a weaker result can be obtained using the upper bound    
$$ | \hat f_n (x) - f_h(x)  | \leq  \sqrt{ \frac{ 24q   \|f_X\|_\infty  \|  K\|_1  }{nh}  \log\left( \frac{ 3n}{\delta}\right)   } +   \frac{ 72q }{nh} \log\left( \frac{ 3n}{\delta}\right) ,$$ where $\| g\|_p $ is the $L_p$-norm of $g$. Such results, with a bound uniform in the location $x$, are given in \cite{einmahl2000,gine+g:02,gine_nivkl_wave}. Our result is stronger in this comparison, since it recovers a location-independent bound while additionally adapting to the local density level; it is thus well-suited for building confidence bands for $f_h(x)$, adaptive in $x$, valid on the full real line. Setting 
$$\rho_n (x)= \sqrt{  \hat f_n (x)  }  \sqrt{ \frac{ 24q }{nh}  \log\left( \frac{ 3n }{\delta}\right)  }  +   \frac{ 72q}{nh} \log\left( \frac{ 3n}{\delta}\right), $$
 we obtain a location-adaptive, data-driven confidence band for $f_h$:
 \[
\mathbb P \left( \forall x\in \mathbb R,\ f_h (x)\in   [ \hat f_n (x) -\rho_n(x) ,  \hat f_n (x) +\rho_n(x) ] \right) \geq 1-2\delta, \] whose width matches the local standard deviation of the estimator up to constants and logarithmic factors. Consequently, the band is adaptive to the local density level, becoming narrower in low-density regions. We also provide confidence bands valid for all choices of $h$, possibly data-dependent, since we have
$$ \mathbb P \left( \forall x \in \mathbb R,\, \forall h>0, \ f_h (x)\in   [ \hat f_n (x) - \tilde \rho_n(x) ,  \hat f_n (x) + \tilde  \rho_n(x) ] \right) \geq 1-2\delta,$$
with $$\tilde \rho_n(x)  = \sqrt{  \hat f_n (x)  }  \sqrt{ \frac{ 40q }{nh}  \log\left( \frac{ 3n }{\delta}\right)  }  +   \frac{ 120q}{nh} \log\left( \frac{ 3n}{\delta}\right). $$
After combining this concentration inequality with an appropriate control of the bias term $|f_h(x)-f_X(x)|$, one may extend the confidence bands provided before to the true density $f_X$. This is left for future research. We refer to \cite{juditsky2003nonparametric,robins2006adaptive,wasserman_2008,gine_nivkl_band} for results and discussions on the construction of confidence bands in density estimation and regression.

\section{Proofs}\label{sec:proofs}

\subsection{An auxiliary convexity lemma} 

The next lemma will be key in achieving the symmetrization step needed in the proof of both Theorem \ref{theorem:th1} and Theorem \ref{theorem:th2}.

\begin{lemma}\label{lemma_conv}
The following holds:
\begin{enumerate}[label=(\roman*)]
\item Let $a \geq 0$ and $t>0$, the function $x\mapsto \exp(t {(a - x) } / {\sqrt{ x+a }} )$ is convex on $\mathbb{R}_{\geq 0}$.
\item Let $a  >0$ and $t\sqrt{ \gamma}  \geq 7 / 9$, the function $x\mapsto \exp(t (x-a) / \sqrt{x+a} )$ is convex on $\mathbb{R}_{\geq 0}$.
\item Let $a \geq 0$, $\gamma > 0$ and $t \sqrt{ \gamma} \geq 8\sqrt 2 / 9  $, the function $x\mapsto \exp(t {(x - a-  \gamma) } / {\sqrt{ x + a }} )$ is convex on $\mathbb{R}_{\geq 0}$.
\end{enumerate}
\end{lemma}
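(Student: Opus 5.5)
Throughout, I would use the elementary criterion that for a $C^2$ function $g$ on an interval, $\exp\circ g$ is convex if and only if $(e^{g})''=e^{g}(g''+g'^2)\ge 0$, i.e.\ $g''+(g')^2\ge 0$. In all three items I would change variables to $s=x+a$, which ranges over $[a,\infty)$, and write the numerator as a function of $s$. For each $c\ge 0$ this reduces the exponent to the form
\[
g_c(s)=\pm t\bigl(s^{1/2}-2c\,s^{-1/2}\bigr).
\]
The boundary point $s=0$ occurs only when $a=0$ and $x=0$. There I would argue by continuity of $\exp\circ g$ together with convexity on the open half-line.

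For (i), I write $a-x=2a-s$, so that $g(s)=t(2a s^{-1/2}-s^{1/2})$. Then
\[
g''(s)=t\bigl(\tfrac32 a s^{-5/2}+\tfrac14 s^{-3/2}\bigr)\ge 0 .
\]
So $g$ is already convex, and $\exp\circ g$ is convex as the composition of a convex nondecreasing function with a convex function. No condition on $t$ is needed.

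For (ii) and (iii), the exponent is $g(s)=t(s^{1/2}-2c\,s^{-1/2})$. In (ii) we have $c=a$. In (iii) we have $c=a+\gamma/2$, since $x-a-\gamma=s-(2a+\gamma)$. A direct computation gives $g'=t(\tfrac12 s^{-1/2}+c s^{-3/2})$ and $g''=-t(\tfrac14 s^{-3/2}+\tfrac32 c s^{-5/2})$. After multiplying by $s^{5/2}/t$, the condition $g''+g'^2\ge0$ becomes
\[
t\,(s/2+c)^2\ \ge\ s^{1/2}\,(s/4+3c/2).
\]
Substituting $s=cu$ turns this into $t\sqrt c\ge \varphi(u)$, where
\[
\varphi(u)=\frac{\sqrt u\,(u+6)}{(u+2)^2}.
\]
Logarithmic differentiation gives $\varphi'(u)=0$ if and only if $u^2+12u-12=0$. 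So $\varphi$ increases on $[0,u^*]$ and decreases on $[u^*,\infty)$, with $u^*=4\sqrt3-6\approx0.93<1$.

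In (ii), $s\ge a=c$ means $u\ge1$, and therefore $\sup_{u\ge1}\varphi=\varphi(1)=7/9$. The required condition is then $t\sqrt a\ge 7/9$. I read the hypothesis "$t\sqrt\gamma\ge7/9$" in the statement as this condition, presumably used with $a\ge\gamma$, in which case it implies $t\sqrt a\ge 7/9$.

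In (iii), $u$ ranges over $[a/c,\infty)$, which may include values near $0$ when $a$ is small, so I need a bound on $\sup_{u\ge0}\varphi$. Rather than evaluating $\varphi(u^*)$ exactly, I would use $\sqrt u\le (u+1)/2$. This gives $\varphi(u)\le (u+1)(u+6)/(2(u+2)^2)$, and the latter is at most $8/9$ because $9(u+1)(u+6)\le 16(u+2)^2$ is equivalent to $7u^2+u+10\ge0$. Since $c\ge\gamma/2$, it then suffices that $t\sqrt{\gamma/2}\ge 8/9$, which is exactly the hypothesis $t\sqrt\gamma\ge 8\sqrt2/9$.

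The main obstacle I expect is not the algebra, which is routine, but getting the reduction to the single-variable inequality right: tracking the sign conventions so that all three items share the form $\pm t(s^{1/2}-2cs^{-1/2})$, and then correctly identifying the range of $u$ in each case. The range $u\ge1$ in (ii) is what makes the clean value $7/9$ available. In (iii), the range is essentially $u\ge 0$, which forces the crude global bound on $\varphi$ and explains the factor $\sqrt2$.
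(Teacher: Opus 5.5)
Your proof is correct and follows the paper's argument essentially step for step. Both use the shift $s=x+a$, the criterion $g''+(g')^2\ge 0$, the reduction to $\varphi(u)=\sqrt u\,(u+6)/(u+2)^2$ with maximizer $u^*=4\sqrt3-6<1$, and the reading of the hypothesis in (ii) as $t\sqrt a\ge 7/9$; that reading is exactly what the paper's proof uses, with $a=P_n(A)\ge 1/n=\gamma$ in the application. The only deviation is in (iii): your bound $\sqrt u\le (u+1)/2$ gives $\sup_{u\ge 0}\varphi\le 8/9$ exactly, replacing the paper's numerical evaluation $\varphi(u^*)\approx 0.778<8/9$, and your continuity remark at $s=0$ covers a boundary case the paper leaves implicit.
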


\begin{proof}

\noindent\textit{Proof of (i).} Let $a \geq 0$. We perform the change of variable $u = x + a$, so that the function becomes $u \mapsto \exp(t( 2a - u )/\sqrt{u})$, which we need to show is convex on $[a, +\infty)$. Define $h(u) = ( 2a - u )/\sqrt{u}$ and compute
\[
h'(u) = - \frac{ 2a + u  }{2u^{3/2}}, \qquad h''(u) = \frac{u + 6a}{4u^{5/2}}.
\]
Convexity of $u \mapsto e^{t h(u)}$ is equivalent to $h''(u) + t(h'(u))^2 \geq 0$ for all $u \geq a$, which holds since $h''(u)\geq 0$ for every $u\geq a$.

\noindent\textit{Proof of (ii).} Let $a >  0$ and $t\sqrt a \geq 7/9$. We perform the change of variable $u = x + a$, so that the function becomes $u \mapsto \exp(t(u - 2a)/\sqrt{u})$, which we need to show is convex on $[a, +\infty)$. Define $h(u) = (u - 2a)/\sqrt{u}$ and compute
\[
h'(u) = \frac{u + 2a}{2u^{3/2}}, \qquad h''(u) = -\frac{u + 6a}{4u^{5/2}}.
\]
Convexity of $u \mapsto e^{t h(u)}$ is equivalent to $h''(u) + t(h'(u))^2 \geq 0$ for all $u \geq a$, that is,
\[
t \geq \sup_{u \geq a} -\frac{h''(u)}{h'(u)^2} = \sup_{u \geq a} \frac{(u+6a)\sqrt{u}}{(u+2a)^2}.
\]
Substituting $u = ay$ with $y \geq 1$, this becomes
\[
\sup_{u \geq a} \frac{(u+6a)\sqrt{u}}{(u+2a)^2} = a^{-1/2} \sup_{y \geq 1} \psi(y), \qquad \psi(y) = \frac{(y+6)\sqrt{y}}{(y+2)^2}.
\]
A direct computation gives
\[
\psi'(y) = \frac{-y^2 - 12y + 12}{2\sqrt{y}(y+2)^3}.
\]
The positive root of $y^2 + 12y - 12 = 0$ is $y^* = -6 + 4\sqrt{3} < 1$, so $-y^2 - 12y + 12 < 0$ for all $y \geq 1$, hence $\psi'(y) < 0$ on $[1,+\infty)$ and $\psi$ is decreasing there. Therefore the supremum is attained at $y = 1$:
\[
\psi(1)   = \frac{7}{9}.
\]
Therefore convexity holds whenever $t \geq a^{-1/2} ( 7/9) $, which is satisfied.

\noindent\textit{Proof of (iii).} Let $a \geq   0$, $\gamma > 0$, and $t\sqrt{\gamma} \geq 8\sqrt 2/9$. We perform the change of variable $u = x + a$, so that the function becomes $u \mapsto \exp(t(u - b)/\sqrt{u})$, with $b = 2a + \gamma$, which we need to show is convex on $[a, +\infty)$. 
Define $h(u) = (u - b)/\sqrt{u}$. Proceeding as in part (ii), we obtain
\[
h'(u) = \frac{u + b}{2u^{3/2}}, \qquad h''(u) = -\frac{u + 3b}{4u^{5/2}}.
\]
Convexity is equivalent to 
$$ t\geq  \sup_{u\geq a} - \frac{h''(u)}{h'(u)^2} =   \sup_{u\geq a}  \frac{u + 3b }{(u + b)^2} \sqrt{ u  } = (b/2)^{-1/2}  \sup_{v\geq 2a/b} \psi(v) ,$$ 
with $\psi(v) = {(v + 6) } \sqrt{ v  } / {(v + 2)^2}  $. As in step (ii), we have that $\psi : \mathbb R_{\geq 0}\to \mathbb R $ achieves its maximum at $v^*  = -6+4\sqrt 3 $ and we have $\psi(v^*)\simeq 0.778 <8/9$. A sufficient condition for convexity is then $t \sqrt{b} = t \sqrt{2a + \gamma} \geq  8\sqrt 2 / 9$ which is implied by $ t   \sqrt \gamma \geq  8 \sqrt 2 / 9$.
\end{proof}

\subsection{Proof of Theorem \ref{theorem:th1}}

 We start proving the first inequality. Let $t>0$. Note that
\begin{align*}
 \mathbb P \left(   \sup_{A\in \mathcal A}  \sqrt n   \frac{P_n(A) -P (A) }{\sqrt{ P_n(A)   }} > t \right) &\leq  \mathbb P \left(   \sup_{A\in \mathcal A}  \sqrt n   \frac{P_n(A) -P (A) }{\sqrt{ ( P_n(A) + P (A ) ) /2  }} > t \right) \\
 & =  \mathbb P \left(   \sup_{A\in \mathcal A}  t \sqrt {\frac n 2} \frac{P_n(A) -P (A) }{\sqrt{ ( P_n(A) + P (A ) )   }} > t^2/2 \right)\\
 &\leq \mathbb E \left( \exp\left(  \sup_{A\in \mathcal A}  t\sqrt {\frac n 2}  \frac{P_n(A) -P (A) }{\sqrt{ P_n(A) + P (A )     }} - t^2/2 \right)\right)\\
 &\leq \mathbb E \left(  \sup_{A\in \mathcal A}\exp\left( t\sqrt {\frac n 2} \frac{P_n(A) -P (A) }{\sqrt{ P_n(A) + P (A )    }}  \right)\right) \exp\left( - \frac{t^2 }{ 2}\right).
\end{align*}
We only have to show that
\begin{align}\label{objective}
 \mathbb E \left(  \sup_{A\in \mathcal A}\exp\left( t\sqrt {\frac n 2} \frac{P_n(A) -P (A) }{\sqrt{ P_n(A) + P (A )    }}  \right)\right) \leq \mathbb S_{\mathcal A}(2n) \exp\left(  \frac{t^2 }{ 4}\right).
\end{align}
 Let $a \in [0,1]$.   Let $g(x) =  {(a -x) } / {\sqrt{ (a+ x)  }}$ be
defined on $ [0,1]$. Setting  $a=P_n(A)$, note that
 $$ \exp\left( t\sqrt {\frac n 2}  \frac{P_n(A) -P (A) }{\sqrt{ (P_n(A) + P (A ))   }}\right )= \exp\left(t\sqrt {\frac n 2} g(  P(A) )\right)   $$ 
 Introduce a ghost sample $(Z_i')_{i=1,\ldots, n }$,  with the same distribution as $(Z_i)_{i=1,\ldots, n }$ but independent from $(Z_i)_{i=1,\ldots, n }$, and let $P_n'(A)= n^{-1} \sum_{i=1} ^n \mathrm 1_A(Z_i')$.  
 Lemma \ref{lemma_conv}, (i), shows that he function $x\mapsto  \exp(t g(x) )$  is convex on $ [0,1] $ whenever $a \geq 0$ and $t>0$. 
  As $P(A)  = \mathbb E'( P_n'(A))$, we have, 
by Jensen's inequality, 
\begin{align*}
\exp\left( t \sqrt {\frac n 2}  \frac{P_n(A) -P (A) }{\sqrt{ (P_n(A) + P (A ))    }}\right) &\leq  \mathbb E'\exp\left(t\sqrt {\frac n 2} \frac{P_n(A) -P_n'(A) }{\sqrt{ (P_n(A) + P_n'(A ))    }}\right) 
\end{align*} 
We have shown that
\begin{align*}
 \mathbb E (   \sup_{A\in \mathcal A} \exp\left( t\sqrt {\frac n 2}  \frac{P_n(A) -P (A) }{\sqrt{ (P_n(A) + P (A ))     }}  \right) 
 &\leq \mathbb E\mathbb E '  \sup_{A\in \mathcal A}\exp\left(t\sqrt {\frac n 2} \frac{P_n(A) -P_n'(A) }{\sqrt{ (P_n(A) + P_n'(A ))    }} \right)
\end{align*}  
Let $W = ((Z_1,Z_1') , \ldots ,( Z_n, Z'_n) ) $ and define for each $i = 1,\ldots , n $, 
$$ \tilde W_i = (Z_i,Z_i')\mathrm 1_{\eta_i = 1} + (Z_i',Z_i)  \mathrm 1_{\eta_i = - 1},$$
where $\mathbb P (\eta_i = 1 )= 1/2 = 1-\mathbb P (\eta_i = -1)  $. Using that $W $ and $\tilde W$ have the same distribution, 
we get
\begin{align*}
\mathbb E\mathbb E '  \sup_{A\in \mathcal A}\exp\left(t\sqrt {\frac n 2}  \frac{P_n(A) -P_n'(A) }{\sqrt{ (P_n(A) + P_n'(A ))    }}  \right) 
 & =  \mathbb E\mathbb E '\mathbb E_\eta \sup_{A\in \mathcal A} \exp\left(t  \frac{\sum_{i=1} ^n \eta_i ( 1_A(Z_i)  -  1_A(Z_i') )  }{\sqrt{ 2n (P_n(A) + P_n'(A ))    }} \right).
\end{align*}
For fixed realizations of $(Z_1,\ldots,Z_n,Z_1',\ldots,Z_n')$, define
\[
\mathcal V=
\Bigl\{
\bigl((1_A(Z_1),\ldots,1_A(Z_n)),(1_A(Z_1'),\ldots,1_A(Z_n'))\bigr)
:\,
A\in\mathcal A
\Bigr\}.
\]
Then
\begin{align*}
\mathbb E_\eta \sup_{A\in \mathcal A} \exp\left(t  \frac{\sum_{i=1} ^n \eta_i ( 1_A(Z_i)  -  1_A(Z_i') )  }{\sqrt{ 2n (P_n(A) + P_n'(A ))    }} \right)
&\leq\sum_{(v,v')\in \mathcal V} \mathbb E_\eta \exp\left(t \frac{\sum_{i=1} ^n \eta_i ( v_i  -  v_i' )  }{\sqrt{2  \sum_{i=1}^n (v_i+ v_i')   }}  \right).
\end{align*}
Since $\eta_i$ is sub-Gaussian with factor $1$, and using $  \sum_{i=1} ^n (v_i - v_i' )^2 \leq     \sum_{i=1}^n (v_i+ v_i') $, we obtain 
$$\mathbb E_\eta \exp\left(t \frac{\sum_{i=1} ^n \eta_i ( v_i  -  v_i' )  }{\sqrt{2   \sum_{i=1}^n (v_i+ v_i')     }}  \right)\leq  \exp\left(t^2  \frac{\sum_{i=1} ^n (v_i - v_i' )^2  }{ {4  ( \sum_{i=1}^n (v_i+ v_i'))      }  }  \right)   \leq \exp( t^2/4).$$
Since each element of $\mathcal V$ corresponds to one configuration of the
$2n$ points $(Z_1,\ldots,Z_n)$ and $(Z_1',\ldots,Z_n')$, we have $|\mathcal V|\leq \mathbb S_{\mathcal A}(2n)$. Putting everything together leads to
\eqref{objective}.

Now we turn our attention to the second inequality which proof is similar to the first one except that the range of valid $t$ value will be reduced at some point. Start splitting the event of interest $ \sup_{A\in \mathcal A} (P-P_n )/\sqrt P >t $ into two  $ \sup_{A\in \mathcal A\, :\, P_n(A) \geq 1/n} (P-P_n )/\sqrt P >t  $   and the same but with $P_n(A) = 0$. 
For the first one, 
\begin{align*}
& \mathbb P \left(  \sup_{A\in \mathcal A\;:\,P_n(A) \geq  1/n} \sqrt n  \frac{P(A) -P_n (A) }{\sqrt{  P (A )   }}  > t \right)\\
 & \leq \mathbb P \left(  \sup_{A\in \mathcal A\;:\,P_n(A) \geq 1/n} \sqrt n  \frac{P(A) -P_n (A) }{\sqrt{ (P_n(A) + P (A )) /2   }}  > t \right) \\
  & = \mathbb P \left(  \sup_{A\in \mathcal A\;:\,P_n(A) \geq  1/n} t \sqrt{ \frac n 2}  \frac{P(A) -P_n (A) }{\sqrt{ (P_n(A) + P (A ))    }}  > \frac{ t ^2}{ 2} \right) \\
 & \leq  \mathbb E \left(  \sup_{A\in \mathcal A\;:\,P_n(A) \geq  1/n}\exp\left( t \sqrt{ \frac n 2} \frac{P(A) -P_n (A) }{\sqrt{ (P_n(A) + P (A ))    }}  \right)\right)\exp\left( - \frac{t^2 }{ 2}\right)
\end{align*}
We will show that
\begin{align}\label{objective1}
 \mathbb E \left(  \sup_{A\in \mathcal A\;:\,P_n(A) \geq 1/n}\exp\left( t\sqrt{ \frac n 2}  \frac{P(A) -P_n (A) }{\sqrt{ (P_n(A) + P (A ))    }}  \right)\right) \leq  \mathbb S_{\mathcal A}(2n) \exp\left( \frac{t^2 }{ 4}\right).
\end{align}
We apply Lemma \ref{lemma_conv}, (ii), with $ \gamma =  1/n$. We have, whenever $ t \sqrt{ n \gamma /2 }= t/\sqrt 2 \geq  7/9 $, or equivalently, whenever  $t\geq 7 \sqrt 2 /9$, that
$$\exp\left( t\sqrt{ \frac n 2} \frac{P(A) -P_n (A) }{\sqrt{ (P_n(A) + P (A ))    }}  \right) \leq \mathbb E'\exp\left( t\sqrt{ \frac n 2} \frac{P_n'(A) -P_n (A) }{\sqrt{ (P_n(A) + P_n' (A ))    }}  \right) . $$
Following the proof of the first inequality, we obtain \eqref{objective1}. By \cite{blumer1989learnability} (see also the proof of Theorem 12.7 in \cite{devroye2013probabilistic}), we have
\begin{align*}
\mathbb P \left(  \sup_{A\in \mathcal A, \,P_n(A)  = 0 } \sqrt n  \frac{P(A) -P_n (A) }{\sqrt{  P (A )   }}  > t \right) &= \mathbb P \left(  \sup_{A\in \mathcal A\;:\,P_n(A)  = 0} n   P(A)    > t^2 \right)\\
&\leq 2\mathbb S_{\mathcal A}(2n) \exp \left( - t^2 \frac{\log(2)}{2}\right)\\
&\leq  2\mathbb S_{\mathcal A}(2n) \exp\left( - \frac{t^2 }{ 4}\right).
\end{align*}
It follows that for all $t\geq  7\sqrt 2 /9$, 
$$ \mathbb P \left(  \sup_{A\in \mathcal A } \sqrt n  \frac{P(A) -P_n (A) }{\sqrt{  P (A )   }}  > t \right)\leq
3\mathbb S_{\mathcal A}(2n) \exp\left( - \frac{t^2 }{ 4}\right).$$
Whenever $t<  7\sqrt 2 /9$, the upper-bound is always larger than $1$, making the inequality valid for all $t>0$.
\qed

\subsection{Proof of Corollary \ref{theorem:cor1}}

The first inequality gives, with probability $1-\delta$, for all $A\in \mathcal A$
$$ n ({P_n(A) -P (A) } )\leq {\sqrt{ 4 nP_n (A )   \log\left( \frac{ \mathbb  S_{\mathcal A}(2n)}{\delta}\right)}    }.$$
Introducing $X= n(P_n(A)- P (A ))$, it follows that
$$ X  \leq {\sqrt{ 4  (n P(A) +X  ) \log\left( \frac{\mathbb  S_{\mathcal A}(2n)}{\delta}\right)}    }$$
or equivalently, $X^2 \leq   4 X \log(  \mathbb S_{\mathcal A}(2n)/\delta) +  4 n P(A) \log(   \mathbb S_{\mathcal A}(2n)/\delta) $. Solving for the highest root, it follows that $$X\leq 4  \log\left( \frac{\mathbb  S_{\mathcal A}(2n)}{\delta}\right)+  \sqrt{4 n P(A) \log\left( \frac{\mathbb  S_{\mathcal A}(2n)}{\delta}\right)}.$$
 The second inequality implies that, with probability $1-\delta$, for all $A\in \mathcal A$
$$  n ({P(A) -P _n (A) } )\leq {\sqrt{ 4 nP (A )  \log\left( \frac{ 3 \mathbb  S_{\mathcal A}(2n)}{\delta}\right) }    }.$$
To obtain the second statement, we proceed as before, exchanging the roles of $P$ and $P_n$ and replacing $\mathbb  S_{\mathcal A}(2n)$ by $ 3\,\mathbb  S_{\mathcal A}(2n)$ in the logarithmic factor.

\qed

\subsection{Proof of Theorem \ref{theorem:th2}}
We start proving the first inequality. The second will be established in a similar manner. We use an  alternative parametrization $ t/\sqrt n$ in place of $t$. Let $t>0$, $\gamma >0$ and $n\geq 1$ be such that $t\sqrt  { \gamma n }\geq 16 / 9$. Note that
\begin{align*}
& \mathbb P \left(   \sup_{f\in \mathcal F}  \sqrt n   \frac{P_n(f) -P (f)  - \gamma }{\sqrt{ P_n(f)       }} > t \right) \\
 &\leq \mathbb P \left(   \sup_{f\in \mathcal F} t \sqrt {\frac n 2}  \frac{P_n(f) -P (f) - \gamma  }{\sqrt{ P_n(f) + P (f )}    }- t^2/2 >0 \right)\\
 &\leq \mathbb E \left( \exp\left(   \sup_{f\in \mathcal F} t \sqrt {\frac n 2}  \frac{P_n(f) -P (f) - \gamma  }{\sqrt{ P_n(f) + P (f )}    }- t^2/2 \right)\right)\\
 & \leq  \mathbb E \left(  \sup_{f\in \mathcal F}\exp\left( t\sqrt {\frac n 2} \frac{P_n(f) -P (f)- \gamma }{\sqrt{ P_n(f) + P (f )    }} \right)\right) \exp(-t^2/2).
\end{align*}
  We only need to show that
\begin{align}\label{objective_1}
   \mathbb E \left(  \sup_{f\in \mathcal F}\exp\left( t\sqrt {\frac n 2} \frac{P_n(f) -P (f)- \gamma }{\sqrt{ P_n(f) + P (f )    }} \right)\right)\leq \mathbb  S_{\textrm{sg} (\mathcal F)}(2n) \exp(t^2/4).
\end{align} 
 Let $Y$ be a random variable with uniform distribution on $(0,1)$. Because $f$ is valued in $[0,1]$, we have  $ f(x) =  E_Y 1_{A}(x,Y)$ with $A = \{ (x,y)\in S\times [0,1] \,: \, y\leq  f(x)  \}$. We now introduce several random variables. Let $(Y_i)_{i=1,\ldots, n } $ be a collection of independent and identically distributed random variables with uniform distribution on $(0,1)$. We suppose that $(Y_i)_{i=1,\ldots, n } $ is independent from $(Z_i)_{i=1,\ldots, n }$. Let $ (Z_i' ,Y_i')_{i=1,\ldots, n }  $ be independent of  $(Z_i, Y_i)_{i=1,\ldots, n }  $ with the same distribution as $(Z_i , Y_i)_{i=1,\ldots, n }  $. The expectation $\mathbb E'$ is over the collection $(Z_i')_{i=1,\ldots, n }, (Y_i')_{i=1,\ldots, n } $ while keeping the other $(Z_i,Y_i)_{i=1,\ldots, n } $ as fixed. The expectation $\mathbb E _{Y_{(1:n)}}$ is over the collection $ (Y_i)_{i=1,\ldots, n } $ while keeping  $(Z_i, Z_i', Y_i')_{i=1,\ldots, n } $ as fixed.  Equipped with this notation, we have
 $$ P(f) = \mathbb E ' n^{-1}\sum_{i=1}^n  1_{A}(Z'_i ,Y'_i) =  \mathbb E' P_n'(A)$$
 as well as
 $$P_n(f) =  n^{-1}\sum_{i=1}^n  f(Z_i) = \mathbb E _{Y_{(1:n)}} n^{-1}\sum_{i=1}^n  1_{A}(Z_i ,Y_i) =  \mathbb E _{Y_{(1:n)}} P_n(A).$$
It follows that
$$ \frac{P_n (f) - P(f)- \gamma }{\sqrt{ P_n(f) + P (f )  } } = \frac{   \mathbb E _{Y_{(1:n)}} P_n(A) -  \mathbb E' P_n'(A) - \gamma   }{\sqrt{\mathbb E _{Y_{(1:n)}} P_n(A) +  \mathbb E' P_n'(A)   } } .$$
Use  Lemma \ref{lemma_conv}, (iii) - convexity of $x\mapsto  \exp(t ( {x-a - \gamma })/{\sqrt{ (a+x)   }})$, valid because $t\sqrt { n / 2} \sqrt{\gamma} \geq  8\sqrt 2 / 9$ -  to obtain
 \begin{align*}
\exp\left( t\sqrt {\frac n 2}\frac{P_n (f) - P(f) - \gamma}{\sqrt{ P_n(f) + P (f )} }  \right) &=   \exp\left(t\sqrt {\frac n 2}  \frac{   \mathbb E _{Y_{(1:n)}} P_n(A) -  \mathbb E' P_n'(A) -\gamma }{\sqrt{\mathbb E _{Y_{(1:n)}}P_n(A) +  \mathbb E' P_n'(A)  } } \right) \\
& \leq  \mathbb E _{Y_{(1:n)}}  \exp\left(t\sqrt {\frac n 2}  \frac{   P_n(A) -  \mathbb E' P_n'(A) - \gamma   }{\sqrt{ P_n(A) + \mathbb E'  P_n'(A)  } } \right)  .
\end{align*} 
Then, removing the $\gamma>0 $, which diminishes the quantity of interest, we get
 \begin{align*}
&   \exp\left( t\sqrt {\frac n 2}\frac{P_n (f) - P(f) - \gamma}{\sqrt{ P_n(f) + P (f )} }  \right) \leq
  \mathbb E _{Y_{(1:n)}}  \exp\left(t\sqrt {\frac n 2}  \frac{   P_n(A) - \mathbb E' P_n'(A)    }{\sqrt{ P_n(A) +  \mathbb E'  P_n'(A)  } } \right)  .
\end{align*}  
 Use Lemma \ref{lemma_conv}, (i) - convexity of $x\mapsto  \exp(t ( {a -x })/{\sqrt{ (a+x)   }})$ - to obtain
 \begin{align*}
\exp\left( t\sqrt {\frac n 2}\frac{P_n (f) - P(f) - \gamma}{\sqrt{ P_n(f) + P (f )} }  \right) 
&\leq   \mathbb E _{Y_{(1:n)}}  \mathbb E'   \exp\left(t\sqrt {\frac n 2}  \frac{   P_n(A) -  P_n'(A)  }{\sqrt{P_n(A) +   P_n'(A)  } } \right) . 
\end{align*} 
Consequently, noting from now on $A_f$ in place of $A$, we have
\begin{align*}
& \sup_{f\in \mathcal F} \exp\left( t\sqrt {\frac n 2}\frac{P_n (f) - P(f) - \gamma}{\sqrt{ P_n(f) + P (f )} }  \right)  \\&\leq    \mathbb E _{Y_{(1:n)}} \mathbb E'   \sup_{f\in \mathcal F}  \exp\left(t\sqrt {\frac n 2}  \frac{   P_n(A_f) -  P_n'(A_f)   }{\sqrt{P_n(A_f) +   P_n'(A_f) } } \right) 
\end{align*}
Let $W = ( ((Z_1,Y_1),(Z_1',Y_1')) , \ldots ,( (Z_n,Y_n), (Z'_n,Y_n') ) ) $ and define for each $i = 1,\ldots , n $, 
$$ \tilde W_i = ((Z_i,Y_i),(Z_i',Y_i'))\mathrm 1_{\eta_i = 1} + ((Z_i',Y_i'),(Z_i,Y_i))  \mathrm 1_{\eta_i = - 1},$$
where $\mathbb P (\eta_i = 1 )= 1/2 = 1-\mathbb P (\eta_i = -1)  $. Using that $W $ and $\tilde W = (\tilde W_1,\ldots, \tilde W_n)$ have the same distribution, 
we get
\begin{align*}
&\mathbb E\mathbb E _{Y_{(1:n)}} \mathbb E '  \sup_{f\in \mathcal F}\exp\left(t\sqrt {\frac n 2}  \frac{P_n(A_f) -P_n'(A_f)  }{\sqrt{ (P_n(A_f) + P_n'(A_f ))    }}  \right) \\
 & =  \mathbb E \mathbb E _{Y_{(1:n)}} \mathbb E '   \mathbb E_\eta \sup_{f\in \mathcal F} \exp\left(t  \frac{ \sum_{i=1} ^n \eta_i ( 1_{A_f}(Z_i,Y_i)  -  1_{A_f}(Z_i',Y_i') )  }{\sqrt{ 2n (P_n(A_f) + P_n'(A_f ))    }} \right)
\end{align*}
For fixed realizations of $((Z_1,Y_1),\ldots,(Z_n,Y_n),(Z_1',Y_1'),\ldots,(Z_n',Y_n'))$, define
\[
\mathcal V=
\Bigl\{
\bigl((1_{A_f}(Z_1,Y_1),\ldots,1_{A_f}(Z_n,Y_n)),(1_{A_f}(Z_1',Y_1'),\ldots,1_{A_f}(Z_n',Y_n'))\bigr)
:\,
f\in\mathcal F
\Bigr\}.
\]
Then
\begin{align*}
\mathbb E_\eta \sup_{f\in \mathcal F} \exp\left(t  \frac{\sum_{i=1} ^n \eta_i ( 1_{A_f}(Z_i,Y_i)  -  1_{A_f}(Z_i',Y_i') )  }{\sqrt{ 2n (P_n({A_f}) + P_n'({A_f} ))    }} \right)
&\leq\sum_{(v,v')\in \mathcal V} \mathbb E_\eta \exp\left(t \frac{\sum_{i=1} ^n \eta_i ( v_i  -  v_i' )  }{\sqrt{2  \sum_{i=1}^n (v_i+ v_i')   }}  \right).
\end{align*}
Since $\eta_i$ is sub-Gaussian with factor $1$, and using $  \sum_{i=1} ^n (v_i - v_i' )^2 \leq     \sum_{i=1}^n (v_i+ v_i') $, we obtain 
$$\mathbb E_\eta \exp\left(t \frac{\sum_{i=1} ^n \eta_i ( v_i  -  v_i' )  }{\sqrt{2   \sum_{i=1}^n (v_i+ v_i')     }}  \right)\leq  \exp\left(t^2  \frac{\sum_{i=1} ^n (v_i - v_i' )^2  }{ {4  ( \sum_{i=1}^n (v_i+ v_i'))      }  }  \right)   \leq \exp( t^2/4).$$
Since each element of $\mathcal V$ corresponds to one configuration of the
$2n$ points $(Z_1,Y_1),\ldots,(Z_n,Y_n)$ and $(Z_1',Y_1'),\ldots,(Z_n',Y_n')$, we have $|\mathcal V|\leq \mathbb S_{sg(\mathcal F) }(2n)$. Putting everything together leads to \eqref{objective_1}. 

To prove the second inequality, we follow the same path except that the main step consists in obtaining
\begin{align}\label{objective_2}
   \mathbb E \left(  \sup_{f\in \mathcal F}\exp\left( t\sqrt {\frac n 2} \frac{ P (f) - P_n(f) - \gamma }{\sqrt{ P_n(f) + P (f )    }} \right)\right)\leq \mathbb  S_{\textrm{sg} (\mathcal F)}(2n) \exp(t^2/4).
\end{align} 
For this we write
$$ \frac{ P(f)- P_n (f) -  \gamma }{\sqrt{ P_n(f) + P (f )  } } = \frac{     \mathbb E' P_n'(A) - \mathbb E _{Y_{(1:n)}} P_n(A) -  \gamma   }{\sqrt{\mathbb E _{Y_{(1:n)}} P_n(A) +  \mathbb E' P_n'(A)   } } ,$$
and then the convexity results of Lemma \ref{lemma_conv}, (iii) and next (i) after dropping out the $\gamma$, are used in a similar fashion as before. The randomization step can also be done and we obtain \eqref{objective_2} by relying on the sub-Gaussian property of the Rademacher variables.

\subsection{Proof of Corollary \ref{theorem:cor2}}

Set $\gamma = t^2 $ and $ t = \sqrt{ 4 n^{-1}\log( \mathbb  S_{\textrm{sg} (\mathcal F)}(2n) /\delta)  } $, for $\delta\leq \exp(-1/2) $. The sufficient condition $t n\sqrt \gamma\geq 2$ is satisfied and we obtain that with probability $1-\delta$, for all $f\in \mathcal F$,
$$ n(P_n(f) -P (f) ) \leq \sqrt{ 4 n  P_n(f) \log \left( \frac{\mathbb  S_{\textrm{sg} (\mathcal F)}(2n) }{ \delta}  \right)} +  4  \log\left (\frac{\mathbb  S_{\textrm{sg} (\mathcal F)}(2n) }{ \delta} \right).$$
With $X = n(P_n(f) -P (f) ) - 4 \ell  $ and $ \ell = \log( \mathbb  S_{\textrm{sg} (\mathcal F)}(2n) /\delta) $, it gives
$$ X\leq \sqrt{ 4 (X+ 4\ell +n  P (f))  \ell } .$$
and then 
$$ X^2 \leq   4 X \ell    +   4(4 \ell +n  P (f))  \ell $$
solving we obtain 
$$ X\leq  4 \ell  + \sqrt{4(4 \ell +n  P (f))  \ell }$$
As a consequence, using that $\sqrt {a+b}\leq \sqrt a + \sqrt b $, 
$$ n(P_n(f) -P (f) )  \leq 12\ell +\sqrt{ 4 n  P (f) \ell }.$$
Putting this together with the first bound gives the first stated results. The proof of the second result is the same replacing $P_n$ with $P$ and conversely.
\qed

\subsection{Proof of Lemma \ref{lemma:comp}}

\noindent\emph{Preliminaries.} Let $\psi(u)=K(u/h)$. We have $\psi=F\mathds 1_A+(1-G)\mathds 1_{A^c}$, where $F$, $G$
and the set $A$ are re-scaled versions of the corresponding objects in
\eqref{eq:kernel}. In particular, $F$ and $G$ are nondecreasing $[0,1]$-valued functions and
$A=\bigcup_{j=1}^q(a_j,b_j]$ with the convention $a_1=-\infty$, so that
$A^c=\bigcup_{j=1}^{q}(b_j,a_{j+1}]$ with the convention $a_{q+1}=+\infty$ and the
reading $(b_q,+\infty]\cap\mathbb R=(b_q,+\infty)$; that is, $A^c$ is also a union of
$q$ intervals of the same type.

\noindent\emph{The decomposition.} Observe that $\operatorname{sg}(\Psi)$ consists of the sets of points
$(z,y)\in\mathbb R\times[0,1]$ satisfying $\psi(x-z)\le y$, for some $x\in\mathbb R$.
Moreover, $\psi(x-z)\le y$ if and only if either $F(x-z)\le y$ and $x-z\in A$, or
$G(x-z)\ge 1-y$ and $x-z\in A^c$. Let
\begin{align*}
&\mathcal I
=
\left\{
\{(z,y)\in\mathbb R\times[0,1]\,:\,x-z\in A,\ F(x-z)\le y\}\,:\,x\in\mathbb R
\right\},\\
&\mathcal J
=
\left\{
\{(z,y)\in\mathbb R\times[0,1]\,:\,x-z\in A^c,\ G(x-z)\ge 1-y\}\,:\,x\in\mathbb R
\right\}.
\end{align*}
Each element of $\operatorname{sg}(\Psi)$ is the union of the element of $\mathcal I$
and the element of $\mathcal J$ associated with the \emph{same} value of $x$, whence
\[
\operatorname{sg}(\Psi)
\subset
\{I\cup J\,:\,I\in\mathcal I,\ J\in\mathcal J\}.
\]
Note further that
$\mathcal I\subset\{I_1\cap I_2\,:\,I_1\in\mathcal I_1,\ I_2\in\mathcal I_2\}$ and
$\mathcal J\subset\{J_1\cap J_2\,:\,J_1\in\mathcal J_1,\ J_2\in\mathcal J_2\}$, with
\begin{align*}
&\mathcal I_1
=
\left\{
\{(z,y)\in\mathbb R\times[0,1]\,:\,x-z\in A\}\,:\,x\in\mathbb R
\right\},\\
&\mathcal I_2
=
\left\{
\{(z,y)\in\mathbb R\times[0,1]\,:\,F(x-z)\le y\}\,:\,x\in\mathbb R
\right\},\\
&\mathcal J_1
=
\left\{
\{(z,y)\in\mathbb R\times[0,1]\,:\,x-z\in A^c\}\,:\,x\in\mathbb R
\right\},\\
&\mathcal J_2
=
\left\{
\{(z,y)\in\mathbb R\times[0,1]\,:\,G(x-z)\ge 1-y\}\,:\,x\in\mathbb R
\right\}.
\end{align*}
Since shatter coefficients are nondecreasing with respect to inclusion of classes,
Theorem~13.5 of \cite{devroye2013probabilistic} (intersection and union of classes)
yields
\begin{equation}\label{eq:product-bound}
\mathbb S_{\operatorname{sg}(\Psi)}(n)
\le
\mathbb S_{\mathcal I_1}(n)\,\mathbb S_{\mathcal I_2}(n)\,
\mathbb S_{\mathcal J_1}(n)\,\mathbb S_{\mathcal J_2}(n).
\end{equation}
We now bound each factor.

\noindent\emph{Bounds on $\mathbb S_{\mathcal I_2}(n)$ and $\mathbb S_{\mathcal J_2}(n)$.}
For $x\le x'$ we have $F(x-z)\le F(x'-z)$ for all $z$, because $F$ is nondecreasing.
Consequently,
\[
\{(z,y)\,:\,F(x'-z)\le y\}\subset\{(z,y)\,:\,F(x-z)\le y\},
\]
so that $\mathcal I_2$ is linearly ordered by inclusion. A class of sets that is
linearly ordered by inclusion has VC dimension at most $1$: if $\{p_1,p_2\}$ were
shattered, some set of the class would contain $p_1$ but not $p_2$ and another would
contain $p_2$ but not $p_1$, and these two sets would not be comparable; see also \cite[Section~2.6]{wellner1996}. By Sauer's lemma
\cite[Theorem~13.3]{devroye2013probabilistic}, it follows that
$\mathbb S_{\mathcal I_2}(n)\le n+1$. The same argument applies to $\mathcal J_2$: for
$x\le x'$, $G(x-z)\le G(x'-z)$, hence
$\{(z,y)\,:\,G(x'-z)\ge 1-y\}\supset\{(z,y)\,:\,G(x-z)\ge 1-y\}$, so that
$\mathcal J_2$ is also linearly ordered by inclusion and
$\mathbb S_{\mathcal J_2}(n)\le n+1$.

\noindent\emph{Bound on $\mathbb S_{\mathcal I_1}(n)$.}
Since $x- A =\bigcup_{j=1}^q \{ x -  ( a_j, b_j]\} $ (with $x- a_1=+\infty$), we have
\[
\mathcal I_1
=\{( x - A )\times[0,1]\,:\,x\in\mathbb R\}
\subset
\Bigl\{\Bigl(\bigcup_{j=1}^q A_j\Bigr)\times[0,1]\,:\,A_j\in\mathcal A,\
\forall j=1,\ldots,q\Bigr\},
\]
where $\mathcal A=\{[ a,b ) \cap\mathbb R\,:\,a\in[-\infty,+\infty),\ b\in(-\infty,+\infty]\}$
is the class of all right-open, left-closed intervals, including half-lines and
$\mathbb R$ itself. The class $\mathcal A$ has VC dimension $2$: no three points
$p_1<p_2<p_3$ can be shattered, since no interval contains $p_1$ and $p_3$ without
containing $p_2$, while two points are clearly shattered. Hence, by Sauer's lemma,
$\mathbb S_{\mathcal A}(n)\le(n+1)^2$, and applying the union property of
Theorem~13.5 of \cite{devroye2013probabilistic} repeatedly ($q-1$ times), we obtain
\[
\mathbb S_{\mathcal I_1}(n)\le\prod_{j=1}^q\mathbb S_{\mathcal A}(n)\le(n+1)^{2q}.
\]

\noindent\emph{Bound on $\mathbb S_{\mathcal J_1}(n)$.}
Each element of $\mathcal J_1$ is the complement in $\mathbb R\times[0,1]$ of the
corresponding element of $\mathcal I_1$. Since a class of sets and the class of its
complements shatter exactly the same finite sets, their shatter coefficients coincide,
and therefore
\[
\mathbb S_{\mathcal J_1}(n)=\mathbb S_{\mathcal I_1}(n)\le(n+1)^{2q}.
\]

\noindent \emph{Conclusion.} Plugging the four bounds into \eqref{eq:product-bound} gives
\[
\mathbb S_{\operatorname{sg}(\Psi)}(n)
\le
(n+1)^{2q}\,(n+1)\,(n+1)^{2q}\,(n+1)
=(n+1)^{4q+2}.
\]
\qed

\subsection{Proof of Lemma \ref{lemma:comp1}}

\noindent\emph{Preliminaries.} Let $\psi(u)=K(u/h)$. We have
$\psi=\tilde F\mathds 1_{\tilde A}+(1-\tilde G)\mathds 1_{\tilde A^c}$, where
$\tilde F(u)=F(u/h)$, $\tilde G(u)=G(u/h)$ and $\tilde A=hA$; since $h>0$, the
set $\tilde A$ is again a union of $q$ intervals of the form
$(\tilde a_j,\tilde b_j]$ with $\tilde a_1=-\infty$, and $\tilde F$ (resp.\
$\tilde G$) is nondecreasing on each interval of $\tilde A$ (resp.\ of
$\tilde A^c$). To lighten notation, we write $F$, $G$, $A$, $a_j$, $b_j$ for
these re-scaled objects in the rest of the proof.

\noindent\emph{The decomposition.} As in the proof of
Lemma~\ref{lemma:comp}, $\operatorname{sg}(\Psi)$ consists of the sets of
points $(z,y)\in\mathbb R\times[0,1]$ satisfying $\psi(x-z)\le y$, for some
$x\in\mathbb R$, and $\psi(x-z)\le y$ if and only if either $F(x-z)\le y$ and
$x-z\in A$, or $G(x-z)\ge 1-y$ and $x-z\in A^c$. Since $F$ and $G$ are no
longer monotone on the whole real line, the global factorization used in the
proof of Lemma~\ref{lemma:comp} is not available, and we instead decompose
interval by interval. For each $j\in\{1,\ldots,q\}$, define the functions
$F_j,G_j:\mathbb R\to[0,1]$ by
\[
F_j(v)=
\begin{cases}
\lim_{u\downarrow a_j}F(u) & v\le a_j,\\
F(v) & v\in(a_j,b_j],\\
F(b_j) & v>b_j,
\end{cases}
\qquad
G_j(v)=
\begin{cases}
\lim_{u\downarrow b_j}G(u) & v\le b_j,\\
G(v) & v\in(b_j,a_{j+1}],\\
G(a_{j+1}) & v>a_{j+1},
\end{cases}
\]
where the third case in the definition of $G_q$ is void since
$a_{q+1}=+\infty$ (the limits exist by monotonicity and boundedness). Each
$F_j$ (resp.\ $G_j$) is nondecreasing on all of $\mathbb R$, as follows from
the monotonicity of $F$ on $(a_j,b_j]$ (resp.\ of $G$ on $(b_j,a_{j+1}]$)
alone. Since $F_j=F$ on $(a_j,b_j]$ and $G_j=G$ on $(b_j,a_{j+1}]$, we
obtain, for every $x\in\mathbb R$,
\begin{align*}
&\{(z,y)\,:\,\psi(x-z)\le y\}\\
&= \bigcup_{j=1}^q\Bigl(
\{(z,y)\,:\,x-z\in(a_j,b_j]\}\cap\{(z,y)\,:\,F_j(x-z)\le y\}\Bigr)\\
&
\qquad \cup \bigcup_{j=1}^{q}\Bigl(
\{(z,y)\,:\,x-z\in(b_j,a_{j+1}]\}\cap\{(z,y)\,:\,G_j(x-z)\ge 1-y\}\Bigr).
\end{align*}
Consequently,
\[
\operatorname{sg}(\Psi)
\subset
\Bigl\{
\bigcup_{j=1}^q (A_j\cap B_j)\ \cup\ \bigcup_{j=1}^{q} (C_j\cap D_j)
\,:\,
A_j,C_j\in\mathcal A',\ B_j\in\mathcal I^{(j)},\ D_j\in\mathcal J^{(j)}
\Bigr\},
\]
where $\mathcal A'=\{I\times[0,1]\,:\,I\in\mathcal A\}$, with $\mathcal A$
the class of all right-open, left-closed intervals introduced in the proof of
Lemma~\ref{lemma:comp}, and
\begin{align*}
&\mathcal I^{(j)}
=\bigl\{\{(z,y)\in\mathbb R\times[0,1]\,:\,F_j(x-z)\le y\}\,:\,x\in\mathbb R\bigr\},\\
&\mathcal J^{(j)}
=\bigl\{\{(z,y)\in\mathbb R\times[0,1]\,:\,G_j(x-z)\ge 1-y\}\,:\,x\in\mathbb R\bigr\}.
\end{align*}
Since shatter coefficients are nondecreasing with respect to inclusion of
classes, the intersection and union properties of
Theorem~13.5 of \cite{devroye2013probabilistic}, applied repeatedly, yield
\begin{equation}\label{eq:product-bound-piecewise}
\mathbb S_{\operatorname{sg}(\Psi)}(n)
\le
\mathbb S_{\mathcal A'}(n)^{2q}\,
\prod_{j=1}^q\mathbb S_{\mathcal I^{(j)}}(n)\,
\prod_{j=1}^{q}\mathbb S_{\mathcal J^{(j)}}(n).
\end{equation}

\noindent\emph{Bounds on each factor.} It was established in the proof of
Lemma~\ref{lemma:comp} that $\mathcal A$ has VC dimension $2$; the same holds
for $\mathcal A'$, whose elements are cylinders over those of $\mathcal A$,
so that, by Sauer's lemma, $\mathbb S_{\mathcal A'}(n)\le(n+1)^2$. Moreover,
for each fixed $j$, the function $F_j$ (resp.\ $G_j$) is nondecreasing on all
of $\mathbb R$, so the classes $\mathcal I^{(j)}$ and $\mathcal J^{(j)}$ are
of exactly the type of the classes $\mathcal I_2$ and $\mathcal J_2$ handled
in the proof of Lemma~\ref{lemma:comp}: the same argument shows that they are
linearly ordered by inclusion, hence of VC dimension at most $1$, and
$\mathbb S_{\mathcal I^{(j)}}(n)\le n+1$,
$\mathbb S_{\mathcal J^{(j)}}(n)\le n+1$.

\noindent\emph{Conclusion.} Plugging these bounds into
\eqref{eq:product-bound-piecewise} gives
\[
\mathbb S_{\operatorname{sg}(\Psi)}(n)
\le
(n+1)^{4q}\,(n+1)^{q}\,(n+1)^{q}
=(n+1)^{6q}.
\]
\qed

\subsection{Proof of Lemma \ref{lemma:comp2}}

\noindent\emph{Preliminaries.} Since the bandwidth is now a parameter of the
class, no re-scaling is performed and we work with the original objects $F$,
$G$, $A$ of \eqref{eq:kernel}. Let $F_j$ and $G_j$, $j=1,\ldots,q$, be the
clamped extensions defined in the proof of Lemma~\ref{lemma:comp1}. In
addition to being nondecreasing on all of $\mathbb R$, each $F_j$ is
left-continuous on all of $\mathbb R$: on $(a_j,b_j]$ this is the assumption
on $F$, and elsewhere $F_j$ is locally constant to the left of every point.
Symmetrically, each $G_j$ is right-continuous on all of $\mathbb R$.

\noindent\emph{The decomposition.} Observe that $\operatorname{sg}(\Psi)$
consists of the sets of points $(z,y)\in\mathbb R\times[0,1]$ satisfying
$K\bigl((x-z)/h\bigr)\le y$, for some $x\in\mathbb R$ and $h>0$. Writing
$v=(x-z)/h$ and repeating the argument of the proof of
Lemma~\ref{lemma:comp1} for each pair $(x,h)$, we obtain
\[
\operatorname{sg}(\Psi)
\subset
\Bigl\{
\bigcup_{j=1}^q (A_j\cap B_j)\ \cup\ \bigcup_{j=1}^{q} (C_j\cap D_j)
\,:\,
A_j,C_j\in\mathcal A',\ B_j\in\mathcal I^{(j)},\ D_j\in\mathcal J^{(j)}
\Bigr\},
\]
where $\mathcal A'$ is as before (indeed, since $h>0$, we have
$(x-z)/h\in(a,b]$ if and only if $z\in[x- hb,\,x- ha)$, which is again an
element of $\mathcal A$), and where the per-interval classes are now indexed
by both parameters:
\begin{align*}
&\mathcal I^{(j)}
=\Bigl\{\bigl\{(z,y)\in\mathbb R\times[0,1]\,:\,
F_j\bigl(\tfrac{x-z}{h}\bigr)\le y\bigr\}\,:\,x\in\mathbb R,\ h>0\Bigr\},\\
&\mathcal J^{(j)}
=\Bigl\{\bigl\{(z,y)\in\mathbb R\times[0,1]\,:\,
G_j\bigl(\tfrac{x-z}{h}\bigr)\ge 1-y\bigr\}\,:\,x\in\mathbb R,\ h>0\Bigr\}.
\end{align*}
As in the proof of Lemma~\ref{lemma:comp1}, the intersection and union
properties of Theorem~13.5 of \cite{devroye2013probabilistic} yield
\begin{equation}\label{eq:product-bound-piecewise-h}
\mathbb S_{\operatorname{sg}(\Psi)}(n)
\le
\mathbb S_{\mathcal A'}(n)^{2q}\,
\prod_{j=1}^q\mathbb S_{\mathcal I^{(j)}}(n)\,
\prod_{j=1}^{q}\mathbb S_{\mathcal J^{(j)}}(n),
\end{equation}
and $\mathbb S_{\mathcal A'}(n)\le(n+1)^2$ as established there. It remains
to bound the shatter coefficients of $\mathcal I^{(j)}$ and
$\mathcal J^{(j)}$; the chain argument of the previous proofs is no longer
available, because with two free parameters these classes are not linearly
ordered by inclusion, and we argue instead through the generalized inverses
of $F_j$ and $G_j$.

\noindent\emph{Bound on $\mathbb S_{\mathcal I^{(j)}}(n)$.}
Fix $j\in\{1,\ldots,q\}$ and define
$L_j(y)=\sup\{v\in\mathbb R\,:\,F_j(v)\le y\}\in[-\infty,+\infty]$, with the
convention $\sup\emptyset=-\infty$. Since $F_j$ is nondecreasing, the set
$\{v\,:\,F_j(v)\le y\}$ is a left half-line; since $F_j$ is left-continuous,
this set is closed whenever it is nonempty and proper: indeed, if
$L_j(y)\in\mathbb R$, then
$F_j(L_j(y))=\lim_{v\uparrow L_j(y)}F_j(v)\le y$, so that
$\{v\,:\,F_j(v)\le y\}=(-\infty,L_j(y)]$. Consequently, for every
$(z,y)\in\mathbb R\times[0,1]$ with $L_j(y)\in\mathbb R$,
\[
F_j\bigl(\tfrac{x-z}{h}\bigr)\le y
\quad\Longleftrightarrow\quad
x-h\,L_j(y)-z\le 0 .
\]
Points $(z,y)$ with $L_j(y)=+\infty$ (resp.\ $L_j(y)=-\infty$) belong to
every (resp.\ no) element of $\mathcal I^{(j)}$ and therefore to no shattered
set; they may be discarded. On the remaining points, the membership functions
$(z,y)\mapsto x-h\,L_j(y)-z$, indexed by $(x,h)$, belong to the linear span
of the three functions $(z,y)\mapsto z$, $(z,y)\mapsto L_j(y)$ and
$(z,y)\mapsto 1$. By Theorem~13.9 of \cite{devroye2013probabilistic}, the VC dimension of
$\mathcal I^{(j)}$ is at most $3$, and Sauer's lemma yields
\[
\mathbb S_{\mathcal I^{(j)}}(n)\le(n+1)^3 .
\]

\noindent\emph{Bound on $\mathbb S_{\mathcal J^{(j)}}(n)$.}
Symmetrically, define
$R_j(u)=\inf\{v\in\mathbb R\,:\,G_j(v)\ge u\}\in[-\infty,+\infty]$, with the
convention $\inf\emptyset=+\infty$. Since $G_j$ is nondecreasing, the set
$\{v\,:\,G_j(v)\ge u\}$ is a right half-line; since $G_j$ is
right-continuous, if $R_j(u)\in\mathbb R$ then
$G_j(R_j(u))=\lim_{v\downarrow R_j(u)}G_j(v)\ge u$, so that
$\{v\,:\,G_j(v)\ge u\}=[R_j(u),+\infty)$. Consequently, for every
$(z,y)\in\mathbb R\times[0,1]$ with $R_j(1-y)\in\mathbb R$,
\[
G_j\bigl(\tfrac{x-z}{h}\bigr)\ge 1-y
\quad\Longleftrightarrow\quad
x-h\,R_j(1-y)-z\ge 0 .
\]
Points with $R_j(1-y)=\pm\infty$ are handled as before and discarded. The
membership functions $(z,y)\mapsto z-h\,R_j(1-y)-x$ belong to the linear span
of the three functions $(z,y)\mapsto z$, $(z,y)\mapsto R_j(1-y)$ and
$(z,y)\mapsto 1$, so that, by Theorem~13.9 of
\cite{devroye2013probabilistic} and Sauer's lemma again,
\[
\mathbb S_{\mathcal J^{(j)}}(n)\le(n+1)^3 .
\]

\noindent\emph{Conclusion.} Plugging these bounds into
\eqref{eq:product-bound-piecewise-h} gives
\[
\mathbb S_{\operatorname{sg}(\Psi)}(n)
\le
(n+1)^{4q}\,(n+1)^{3q}\,(n+1)^{3q}
=(n+1)^{10q}.
\]
\qed

\subsection{Proof of Theorem \ref{th:density}}

We start by proving the second statement. 
Let $h>0$. Set $\psi_{h,x} (X) = K( { x-X} /{h} )$. Let $X$ be a random variable with density $f_X$. Noting that 
$$  h^{-1} \mathbb E \left[ \psi_{h,x} (X) \right]  = (f_X\ast K_h) (x) = f_h(x),$$
it follows that
$$ (nh) (\hat f_n(x) - f_h(x)) =    \sum_{i=1}^n \left( \psi_{h,x} (X_i)  - \mathbb E \left[ \psi_{h,x} (X) \right] \right) .$$
Let $ \Psi = \{\psi_{h,x} \,:\, x\in \mathbb R, \ h>0\}$.
Applying the first statement of Corollary \ref{theorem:cor2} with $ \Psi $ gives with probability $1- \delta$, for all $x\in \mathbb  R$ and $h>0$,
\begin{align*}
& \sum_{i=1}^n  \left( \psi_{h,x} (X)   - \mathbb E \left[  \psi_{h,x} (X) \right] \right)  \\
&\leq    \sqrt{4 n  \left(P ( \psi_{h,x} ) \wedge P_n ( \psi_{h,x} ) \right) \log\left( \frac{ \mathbb  S_{\mathrm{sg} (\Psi) }(2n)}{\delta}\right)  }  +   12  \log\left( \frac{ \mathbb  S_{\mathrm{sg} (\Psi)  }(2n)}{\delta}\right)   \\
 &\leq    \sqrt{4 (nh)  \left( f_h(x)  \wedge  \hat f_n(x) \right) \log\left( \frac{ \mathbb  S_{\mathrm{sg} (\Psi) }(2n)}{\delta}\right)  }  +   12  \log\left( \frac{ \mathbb  S_{\mathrm{sg} (\Psi )  }(2n)}{\delta}\right)   .
\end{align*}
Applying the second statement of Corollary \ref{theorem:cor2} with $ \Psi $ gives with probability $1- \delta$, for all $x\in \mathbb  R$ and $h>0$,
\begin{align*}
& - \sum_{i=1}^n  \left( \psi_{h,x} (X)   - \mathbb E \left[  \psi_{h,x} (X) \right] \right)   \\
 &\leq    \sqrt{4 (nh)  \left( f_h(x)  \wedge  \hat f_n(x) \right) \log\left( \frac{ \mathbb  S_{\mathrm{sg} (\Psi) }(2n)}{\delta}\right)  }  +   12  \log\left( \frac{ \mathbb  S_{\mathrm{sg} (\Psi )  }(2n)}{\delta}\right)   .
\end{align*}
An upper bound on $\mathbb{S}_{\operatorname{sg}(\Psi)}(2n)$ is now claimed invoking Lemma \ref{lemma:comp2}. We have 
$$
\frac{\mathbb S_{\operatorname{sg}(\Psi)}(2n)}{\delta}
\le
 \left( \frac{{3n} }{ {\delta} } \right)^{10q}.
$$
It follows, with probability $1-2\delta$, for all $x\in \mathbb  R$ and $h>0$,
\begin{align*}
& (nh) | \hat f_n - f_h (x) |\\
  &\leq   \sqrt{40q nh  \left( f_h(x)  \wedge  \hat f_n(x) \right)  \log\left( \frac{ 3n }{\delta}\right)  } +    120q  \log\left( \frac{ 3n}{\delta}\right) .
\end{align*}
The first statement follows from the same argument, invoking Lemma \ref{lemma:comp1} in place of Lemma \ref{lemma:comp2}, so that the exponent $10q$ is replaced by $6q$ and the constants $(40q, 120q)$ by $(24q,72q)$; in that case the supremum is over $x$ only, at fixed $h$.

\qed

\bibliographystyle{chicago}
\bibliography{b2}

\begin{thebibliography}{}

\bibitem[\protect\citeauthoryear{Alexander}{Alexander}{1987}]{alexander1987central}
Alexander, K.~S. (1987).
\newblock The central limit theorem for empirical processes on
  vapnik--{\v{c}}ervonenkis classes.
\newblock {\em The Annals of Probability\/}, 178--203.

\bibitem[\protect\citeauthoryear{Anthony and Bartlett}{Anthony and
  Bartlett}{2009}]{anthony2009neural}
Anthony, M. and P.~L. Bartlett (2009).
\newblock {\em Neural network learning: Theoretical foundations}.
\newblock cambridge university press.

\bibitem[\protect\citeauthoryear{Anthony and Shawe-Taylor}{Anthony and
  Shawe-Taylor}{1993}]{anthony1993result}
Anthony, M. and J.~Shawe-Taylor (1993).
\newblock A result of {V}apnik with applications.
\newblock {\em Discrete Appl. Math.\/}~{\em 47\/}(3), 207--217.

\bibitem[\protect\citeauthoryear{Audibert, Munos, and Szepesv{\'a}ri}{Audibert
  et~al.}{2009}]{audibert2009exploration}
Audibert, J.-Y., R.~Munos, and C.~Szepesv{\'a}ri (2009).
\newblock Exploration--exploitation tradeoff using variance estimates in
  multi-armed bandits.
\newblock {\em Theoretical Computer Science\/}~{\em 410\/}(19), 1876--1902.

\bibitem[\protect\citeauthoryear{Baraud}{Baraud}{2016}]{baraud2016bounding}
Baraud, Y. (2016).
\newblock Bounding the expectation of the supremum of an empirical process over
  a (weak) vc-major class.
\newblock {\em Electronic Journal of Statistics\/}~{\em 10}, 1709--1728.

\bibitem[\protect\citeauthoryear{Bartlett and Lugosi}{Bartlett and
  Lugosi}{1999}]{bartlett1999inequality}
Bartlett, P. and G.~Lugosi (1999).
\newblock An inequality for uniform deviations of sample averages from their
  means.
\newblock {\em Statistics \& probability letters\/}~{\em 44\/}(1), 55--62.

\bibitem[\protect\citeauthoryear{Bartlett}{Bartlett}{1998}]{bartlett1998sample}
Bartlett, P.~L. (1998).
\newblock The sample complexity of pattern classification with neural networks:
  the size of the weights is more important than the size of the network.
\newblock {\em IEEE transactions on Information Theory\/}~{\em 44\/}(2),
  525--536.

\bibitem[\protect\citeauthoryear{Blumer, Ehrenfeucht, Haussler, and
  Warmuth}{Blumer et~al.}{1989}]{blumer1989learnability}
Blumer, A., A.~Ehrenfeucht, D.~Haussler, and M.~K. Warmuth (1989).
\newblock Learnability and the vapnik-chervonenkis dimension.
\newblock {\em Journal of the ACM (JACM)\/}~{\em 36\/}(4), 929--965.

\bibitem[\protect\citeauthoryear{Boucheron, Lugosi, and Massart}{Boucheron
  et~al.}{2013}]{boucheron2013concentration}
Boucheron, S., G.~Lugosi, and P.~Massart (2013).
\newblock {\em Concentration inequalities. A nonasymptotic theory of
  independence}.
\newblock Oxford University Press, Oxford.

\bibitem[\protect\citeauthoryear{Bousquet, Boucheron, and Lugosi}{Bousquet
  et~al.}{2003}]{bousquet2003introduction}
Bousquet, O., S.~Boucheron, and G.~Lugosi (2003).
\newblock Introduction to statistical learning theory.
\newblock In {\em Summer school on machine learning}, pp.\  169--207. Springer.

\bibitem[\protect\citeauthoryear{Chaudhuri and Dasgupta}{Chaudhuri and
  Dasgupta}{2010}]{chaudhuri2010rates}
Chaudhuri, K. and S.~Dasgupta (2010).
\newblock Rates of convergence for the cluster tree.
\newblock In {\em NeurIPS proceedings}, Volume~23, pp.\  343--351.

\bibitem[\protect\citeauthoryear{Cortes, Greenberg, and Mohri}{Cortes
  et~al.}{2019}]{cortes2019relative}
Cortes, C., S.~Greenberg, and M.~Mohri (2019).
\newblock Relative deviation learning bounds and generalization with unbounded
  loss functions.
\newblock {\em Annals of Mathematics and Artificial Intelligence\/}~{\em
  85\/}(1), 45--70.

\bibitem[\protect\citeauthoryear{Devroye, Gy{\"o}rfi, and Lugosi}{Devroye
  et~al.}{2013}]{devroye2013probabilistic}
Devroye, L., L.~Gy{\"o}rfi, and G.~Lugosi (2013).
\newblock {\em A probabilistic theory of pattern recognition}, Volume~31.
\newblock Springer Science \& Business Media.

\bibitem[\protect\citeauthoryear{Einmahl and Mason}{Einmahl and
  Mason}{2000}]{einmahl2000}
Einmahl, U. and D.~M. Mason (2000).
\newblock An empirical process approach to the uniform consistency of
  kernel-type function estimators.
\newblock {\em J. Funct. Anal.\/}~{\em 13\/}(1), 1--37.

\bibitem[\protect\citeauthoryear{Genovese and Wasserman}{Genovese and
  Wasserman}{2008}]{wasserman_2008}
Genovese, C. and L.~Wasserman (2008).
\newblock {Adaptive confidence bands}.
\newblock {\em The Annals of Statistics\/}~{\em 36\/}(2), 875 -- 905.

\bibitem[\protect\citeauthoryear{Gin{\'e} and Guillou}{Gin{\'e} and
  Guillou}{2002}]{gine+g:02}
Gin{\'e}, E. and A.~Guillou (2002).
\newblock Rates of strong uniform consistency for multivariate kernel density
  estimators.
\newblock {\em Ann. Inst. Henri Poincar\'{e} Probab. Stat.\/}~{\em 38\/}(6),
  907--921.
\newblock En l'honneur de J. Bretagnolle, D. Dacunha-Castelle, I. Ibragimov.

\bibitem[\protect\citeauthoryear{Gin\'{e} and Nickl}{Gin\'{e} and
  Nickl}{2009}]{gine2009}
Gin\'{e}, E. and R.~Nickl (2009).
\newblock An exponential inequality for the distribution function of the kernel
  density estimator, with applications to adaptive estimation.
\newblock {\em Probab. Theory Related Fields\/}~{\em 143\/}(3-4), 569--596.

\bibitem[\protect\citeauthoryear{Gin{\'e} and Nickl}{Gin{\'e} and
  Nickl}{2010a}]{gine_nivkl_wave}
Gin{\'e}, E. and R.~Nickl (2010a).
\newblock {Adaptive estimation of a distribution function and its density in
  sup-norm loss by wavelet and spline projections}.
\newblock {\em Bernoulli\/}~{\em 16\/}(4), 1137 -- 1163.

\bibitem[\protect\citeauthoryear{Gin{\'e} and Nickl}{Gin{\'e} and
  Nickl}{2010b}]{gine_nivkl_band}
Gin{\'e}, E. and R.~Nickl (2010b).
\newblock {Confidence bands in density estimation}.
\newblock {\em The Annals of Statistics\/}~{\em 38\/}(2), 1122 -- 1170.

\bibitem[\protect\citeauthoryear{Greenberg and Mohri}{Greenberg and
  Mohri}{2014}]{greenberg2014tight}
Greenberg, S. and M.~Mohri (2014).
\newblock Tight lower bound on the probability of a binomial exceeding its
  expectation.
\newblock {\em Statistics \& Probability Letters\/}~{\em 86}, 91--98.

\bibitem[\protect\citeauthoryear{Hagerup and R{\"u}b}{Hagerup and
  R{\"u}b}{1990}]{hagerup1990guided}
Hagerup, T. and C.~R{\"u}b (1990).
\newblock A guided tour of chernoff bounds.
\newblock {\em Information processing letters\/}~{\em 33\/}(6), 305--308.

\bibitem[\protect\citeauthoryear{Hsu, Kakade, and Zhang}{Hsu
  et~al.}{2012}]{hsu_12}
Hsu, D., S.~Kakade, and T.~Zhang (2012).
\newblock {A tail inequality for quadratic forms of subgaussian random
  vectors}.
\newblock {\em Electronic Communications in Probability\/}~{\em 17\/}(none), 1
  -- 6.

\bibitem[\protect\citeauthoryear{Juditsky and Lambert-Lacroix}{Juditsky and
  Lambert-Lacroix}{2003}]{juditsky2003nonparametric}
Juditsky, A. and S.~Lambert-Lacroix (2003).
\newblock Nonparametric confidence set estimation.
\newblock {\em Mathematical Methods of Statistics\/}~{\em 12\/}(4), 410--428.

\bibitem[\protect\citeauthoryear{Maurer and Pontil}{Maurer and
  Pontil}{2009}]{MaurerPontil2009}
Maurer, A. and M.~Pontil (2009).
\newblock Empirical bernstein bounds and sample-variance penalization.
\newblock In {\em Proceedings of the 22nd Annual Conference on Learning Theory
  (COLT 2009)}.

\bibitem[\protect\citeauthoryear{Nolan and Pollard}{Nolan and
  Pollard}{1987}]{nolan+p:1987}
Nolan, D. and D.~Pollard (1987).
\newblock {$U$}-processes: rates of convergence.
\newblock {\em Ann. Statist.\/}~{\em 15\/}(2), 780--799.

\bibitem[\protect\citeauthoryear{Pollard}{Pollard}{1982}]{pollard1982central}
Pollard, D. (1982).
\newblock A central limit theorem for empirical processes.
\newblock {\em Journal of the australian mathematical society\/}~{\em 33\/}(2),
  235--248.

\bibitem[\protect\citeauthoryear{Pollard}{Pollard}{1995}]{pollard1995uniform}
Pollard, D. (1995).
\newblock Uniform ratio limit theorems for empirical processes.
\newblock {\em Scandinavian Journal of Statistics\/}, 271--278.

\bibitem[\protect\citeauthoryear{Portier}{Portier}{2025}]{portier2025nearest}
Portier, F. (2025).
\newblock Nearest neighbor empirical processes.
\newblock {\em Bernoulli\/}~{\em 31\/}(1), 312--332.

\bibitem[\protect\citeauthoryear{Robins and van~der Vaart}{Robins and van~der
  Vaart}{2006}]{robins2006adaptive}
Robins, J. and A.~van~der Vaart (2006).
\newblock Adaptive nonparametric confidence sets.
\newblock {\em Ann. Statist.\/}~{\em 34\/}(1), 229--253.

\bibitem[\protect\citeauthoryear{Rockafellar}{Rockafellar}{1997}]{rockafellar1997convex}
Rockafellar, R.~T. (1997).
\newblock {\em Convex analysis}, Volume~28.
\newblock Princeton university press.

\bibitem[\protect\citeauthoryear{Van Der~Vaart and Wellner}{Van Der~Vaart and
  Wellner}{1996}]{wellner1996}
Van Der~Vaart, A.~W. and J.~A. Wellner (1996).
\newblock {\em Weak Convergence and Empirical Processes. With Applications to
  Statistics}.
\newblock Springer Series in Statistics. New York: Springer-Verlag.

\bibitem[\protect\citeauthoryear{Vapnik}{Vapnik}{2013}]{vapnik2013nature}
Vapnik, V. (2013).
\newblock {\em The nature of statistical learning theory}.
\newblock Springer science \& business media.

\bibitem[\protect\citeauthoryear{Vapnik and Chervonenkis}{Vapnik and
  Chervonenkis}{2015}]{vapnik2015uniform}
Vapnik, V.~N. and A.~Y. Chervonenkis (1971 - repreinted in 2015).
\newblock On the uniform convergence of relative frequencies of events to their
  probabilities.
\newblock In {\em Measures of complexity}, pp.\  11--30. Springer, Cham.
\newblock Reprint of Theor. Probability Appl. {16} (1971), 264--280.

\bibitem[\protect\citeauthoryear{Xue and Kpotufe}{Xue and
  Kpotufe}{2018}]{xue2018achieving}
Xue, L. and S.~Kpotufe (2018).
\newblock Achieving the time of 1-nn, but the accuracy of k-nn.
\newblock In {\em International Conference on Artificial Intelligence and
  Statistics}, pp.\  1628--1636. PMLR.

\end{thebibliography}

\end{document}